\documentclass[11pt]{article}

\usepackage[margin=1in]{geometry}
\usepackage{amsmath,amssymb,amsthm,mathtools}
\usepackage[colorlinks=true,linkcolor=blue,citecolor=blue,urlcolor=blue]{hyperref}

\newtheorem{theorem}{Theorem}[section]
\newtheorem{proposition}[theorem]{Proposition}
\newtheorem{lemma}[theorem]{Lemma}

\theoremstyle{remark}
\newtheorem{remark}[theorem]{Remark}

\newcommand{\R}{\mathbb R}
\newcommand{\C}{\mathbb C}
\newcommand{\Q}{\mathbb Q}
\newcommand{\Z}{\mathbb Z}
\newcommand{\OK}{\mathcal O_K}
\newcommand{\vol}{\operatorname{vol}}
\newcommand{\covol}{\operatorname{covol}}
\newcommand{\Sym}{\operatorname{Sym}}
\newcommand{\Herm}{\operatorname{Herm}}
\newcommand{\Tr}{\operatorname{Tr}}
\newcommand{\Id}{\operatorname{Id}}
\newcommand{\E}{\mathbb E}
\newcommand{\Prob}{\mathbb P}
\newcommand{\HS}{\operatorname{HS}}
\newcommand{\op}{\operatorname{op}}
\newcommand{\calE}{\mathcal E}

\title{Stochastically evolving ellipsoids with symmetries}
\author{Elisha B. Abuya \and Nihar Gargava \and Yufei Zhao}
\date{June 3, 2026}

\begin{document}
\maketitle

\begin{abstract}
We prove that there is a universal constant $c>0$ such that, along an infinite
sequence of dimensions $N$, there are lattice sphere packings in $\mathbb R^N$
of density at least $cN^2\log\log N\,2^{-N}$, improving the previous best bound
due to Klartag by a $\log\log N$ factor.  The proof follows Klartag's stochastic
ellipsoid evolution process, subject to the cyclotomic symmetries
introduced by Venkatesh.
\end{abstract}

\paragraph*{Statement on AI use.}

The first two authors developed an approach combining
Venkatesh's~\cite{Venkatesh} cyclotomic symmetries with
Klartag's~\cite{Klartag} stochastic ellipsoid method.
They focused on the case
of modules of rank two over the ring of cyclotomic integers like in Venkatesh's setup.
This seemed to obtain a suboptimal lower bound on sphere packings which is in between 
Venkatesh's lower bound and Klartag's lower bound.
Before moving to high ranks, the authors struggled to conclude if this is a miscalculation or a methodological problem
in the setup.

Independently, the third author, inspired by the recent OpenAI announcement
of an AI-generated disproof of the Erd\H{o}s unit distance
conjecture~\cite{ABGLSSTWW, OpenAI}, prompted the
GPT-5.5 Pro model to extend Venkatesh's construction to obtain improved lower
bounds for high-dimensional sphere packings.  The model proposed combining
Venkatesh's cyclotomic symmetries with Klartag's stochastic ellipsoid method
and, after further prompting to continue pushing on the approach, produced an
improvement over the previous best bound.  The model discovered the idea of letting the rank grow and successfully
proved the bound given below.

The third author then contacted the first author, and the three authors
joined forces to verify the proof and edit the writing, with ChatGPT and Codex
used extensively in the process.  The ancillary files accompanying this paper
include a complete log of the ChatGPT conversation that led to the proof.

The first author is choosing to use a pseudonym.

\section{Introduction}

A lattice sphere packing in $\mathbb R^N$ is a collection of disjoint
congruent Euclidean balls whose centers form a full-rank lattice.  Its density
is the proportion of space covered by the balls.  Let $\Delta_N^{\rm lat}$
denote the supremum of the densities of all lattice sphere packings in
$\mathbb R^N$.

The Minkowski--Hlawka theorem~\cite{Hlawka} gives the lattice-packing lower
bound $\Delta_N^{\rm lat}\ge (2+o(1))\,2^{-N}$.
Rogers~\cite{Rogers} proved the first linear improvement to this lower bound,
$\Delta_N^{\rm lat}\ge cN\,2^{-N}$.  Schmidt~\cite{Schmidt} later gave another
proof of a linear lower bound for lattice packings.  Subsequent work of
Davenport and Rogers~\cite{DavenportRogers}, Ball~\cite{Ball}, and
Vance~\cite{Vance} improved the constant in this lower bound and obtained
further gains in special dimensions.  Using lattices with
cyclotomic symmetries, Venkatesh~\cite{Venkatesh} obtained a
lattice-packing lower bound with an
additional factor of order $\log\log N$ along a sparse sequence of
dimensions; Gargava and Viazovska~\cite{GargavaViazovska} later improved its
leading constant.  For unrestricted sphere packings,
Krivelevich, Litsyn, and Vardy~\cite{KLV} used
graph-theoretic arguments to prove a lower bound of order $N2^{-N}$, and
Campos, Jenssen,
Michelen, and Sahasrabudhe~\cite{CJMS} recently improved this
unrestricted-packing lower
bound to
$(1/2-o(1))N\log N\,2^{-N}$.
Klartag~\cite{Klartag} recently obtained the lattice-packing lower bound
$\Delta_N^{\rm lat}\ge cN^2\,2^{-N}$ in every dimension by introducing a
stochastically growing ellipsoid process.  The main
idea of this paper is to combine Klartag's process with the cyclotomic
symmetries in Venkatesh's construction.

\begin{theorem}\label{thm:main}
There exists a universal constant $c>0$ and an infinite
sequence of dimensions $N$ for which
\[
        \Delta_N^{\rm lat}\ge cN^2\log\log N\,2^{-N}.
\]
\end{theorem}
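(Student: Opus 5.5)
We plan to run Klartag's stochastic ellipsoid process inside the space of lattices carrying the cyclotomic symmetry used by Venkatesh. Take $K=\Q(\zeta_m)$ with $d=\varphi(m)$, and consider $\OK$-modules $L\subset K_\R^n\cong\R^N$ with $N=dn$, where $K_\R=K\otimes_\Q\R\cong\C^{d/2}$. Such lattices are invariant under multiplication by the roots of unity $\mu_m$, which act freely on nonzero vectors. Consequently every nonzero lattice vector comes with an orbit of at least $m$ vectors of the same length (or of $2m$ if $m$ is odd). We will choose $m$ to be a product of the first primes, so that $m/\varphi(m)\asymp\log\log m$. This ratio $m/d$ is exactly what produced Venkatesh's gain. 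In contrast to Venkatesh's rank-two setup, the rank $n$ will be allowed to grow, so that Klartag's $N^2$ gain survives. The plan is to take $n$ of order $N/d$ with $d$ very small compared with $N$ (say $d\le \log N$ or even smaller), so that $\log\log m\asymp\log\log N$ is still available.

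The key steps are as follows.

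\textbf{Step 1.} We first set up the symmetric ellipsoid space. The natural ellipsoids are those invariant under $\mu_m$ and compatible with the $K_\R$-structure, namely $\calE=\{x:\langle Ax,x\rangle\le 1\}$ with $A$ a positive definite $K_\R$-Hermitian form. Concretely, $A$ is a tuple of Hermitian $n\times n$ matrices, one for each complex place. The space of such forms has dimension of order $d\,n^2$ rather than $N^2=d^2n^2$.

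\textbf{Step 2.} Next we run Klartag's process with $\mu_m$-symmetric updates. Draw $L$ from the Haar (Siegel-type) measure on unimodular $\OK$-module lattices. The Siegel mean value theorem for these (Weil, as used by Venkatesh) gives $\E\#\bigl((L\setminus 0)\cap S\bigr)=\vol(S)$. Grow the ellipsoid $\calE_t$ as in Klartag: at each step pick a random direction among lattice points near the boundary that is not yet forbidden, and push the ellipsoid outward in the Hermitian directions. We stop when the expected number of lattice points inside exceeds the budget. Because lattice points come in $\mu_m$-orbits, one Hermitian rank-one update $v v^*$ (over $K_\R$) simultaneously ``handles'' a whole orbit of $m$ points. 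This is the mechanism that should multiply Klartag's gain by $m/d$.

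\textbf{Step 3.} We then do the volume bookkeeping. We need to show that the final ellipsoid contains no nonzero lattice point with positive probability, while its volume reaches $c\,N^2 (m/d)$, so that packing density $=\vol(\calE)/2^N$ after a linear change to a ball. Following Klartag, we control the volume growth via $\log\det A_t$ and a martingale/concentration argument. Here Klartag's count of ``degrees of freedom'' (roughly $N^2$ from the dimension of the symmetric-matrix space, which gives the $N^2$) is replaced by the dimension $\asymp dn^2=N^2/d$ of the Hermitian space, multiplied by the orbit size $m$. The net gain is $N^2\cdot m/d\asymp N^2\log\log N$.

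\textbf{Step 4.} Finally, we choose parameters. Take $m=\prod_{p\le y}p$ so that $d$ is tiny, and choose $N=dn$ with $n\to\infty$. The error terms in the Siegel-type variance estimates and in the concentration for the process must be shown to be $o(1)$ relative to the main term.

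The main obstacle is Step 3. We must check that Klartag's second-moment/independence estimates survive when lattice points are correlated through the symmetry. This means: the pair correlation of $\OK$-module lattices (the Rogers-type second moment, which now has extra contributions from pairs related by elements of $K$, not only $\pm1$) must be controlled, uniformly in $n$ and with small $d$. The first thing to try is to count pairs $(x,y)$ with $y=\alpha x$, $\alpha\in K^\times$. Such pairs contribute only when $\alpha$ has small height, and they should be negligible once $n\gg d$, since the number of such $\alpha$ grows like $e^{O(d)}$ while the relevant volumes shrink like $e^{-\Omega(N)}$. This is precisely the reason for letting the rank grow.
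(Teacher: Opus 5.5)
Your architecture matches the paper's: Klartag's process restricted to $K_\R$-Hermitian forms, contacts counted by $\mu_m$-orbits, primorial $m$, and growing rank. Two points fail as written.

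First, your parameter choice destroys the gain. For primorial $m$ one has $\log d\sim\log m$ and $m/d\asymp\log\log m\asymp\log\log d$. So with $d=\varphi(m)\le\log N$ (almost all of the dimension sitting in the rank $n\ge N/\log N$), the factor $m/d$ is at most of order $\log\log\log N$. Your assertion that $\log\log m\asymp\log\log N$ ``is still available'' is false under this choice. You need $\log d$ comparable to $\log N$, so the rank may grow only slowly in terms of $d$. The paper takes $n=r=\lceil(\log\varphi(m))^4\rceil$, giving $N=\varphi(m)r$ and $\log N\sim\log m$. Note also that your Step~3 heuristic (dimension $\asymp N^2/d$ times orbit size $m$) yields $N^2m/d$ for \emph{every} rank, including $n=2$. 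So it explains neither why the rank must grow nor how fast.

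Second, the obstacle you single out is not there, and the one that is there is missing. Klartag's method uses only the first moment:
the ellipsoid is lattice-free at every time by construction (Brownian motion in the forms, projected onto the contact-preserving subspace $F_A$);
the volume gain comes from the It\^o drift of $\log\det A_t$;
the only lattice input is a bound on the expected number of contacts by a linear lattice sum $K_t(\Lambda)$ of reflection-principle Gaussian tails, averaged with the Siegel mean value formula.
No Rogers-type second moment or pair correlations $y=\alpha x$ ever enter.

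The genuinely new difficulty is that confining the motion to $\mathcal Q_K$ makes the variance direction-dependent. The quadratic variation of $A_t[x]$ is bounded by $\tfrac12\sum_j\|x_j\|^4\,t$ rather than $\|x\|^4t$. After the radial integral, the averaged orbit-contact bound is therefore controlled by $\frac1m\E_\omega\exp\bigl((1+o(1))\tfrac{Nrt}{8}\,s\sum_j\|\omega_j\|^4\bigr)$, where $\E\bigl[s\sum_j\|\omega_j\|^4\bigr]=\frac{s(r+1)}{sr+1}\approx1+1/r$. At the stopping time the exponent is $\lambda\approx\log(mNr)$, and one needs $\E e^{\lambda s\sum_j\|\omega_j\|^4}\le Ce^{\lambda}$. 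Via Lipschitz concentration on $S^{N-1}$, this holds only for $\lambda\le c\sqrt r$. With bounded rank, the $1+1/r$ excess limits the argument to a volume gain of $(mNr)^{r/(r+1)}$. This, not pair correlations, is why the rank must grow: it must exceed a power of $\log N$. Combined with the first point, this forces $d$ to be nearly all of $N$, as in the paper's choice.
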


On various occasions, Venkatesh has conjectured the upper bound
$\Delta_N^{\rm lat}\le N^2(\log N)^{O(1)}\,2^{-N}$.  This is motivated by the
fact that lattices in $\mathbb R^N$ have on the order of $N^2$ degrees of
freedom.

For comparison, here are the known upper bounds for arbitrary sphere packings.
Earlier bounds are due to Blichfeldt~\cite{Blichfeldt},
Rankin~\cite{Rankin}, Rogers~\cite{RogersPacking}, and
Levenshtein~\cite{Levenshtein}.
Kabatiansky and Levenshtein~\cite{KabatianskyLevenshtein}
proved that every sphere packing in $\mathbb R^N$ has density at most
$2^{-(0.5990\ldots+o(1))N}$.
This remains the best known exponential rate; later work of Cohn and
Zhao~\cite{CohnZhao}, Sardari and Zargar~\cite{SardariZargar}, and
Zargar~\cite{Zargar} gives constant-factor improvements.
The Cohn--Elkies~\cite{CohnElkies} linear program is another central framework
for upper-bounding sphere-packing densities; it recovers the
Kabatiansky--Levenshtein bound~\cite{CohnZhao}.
In the opposite direction, Edwin~\cite{Edwin} recently proved
that the optimal Cohn--Elkies bound is at least
$\frac14(e/8)^{N/2}=2^{-(0.77865\ldots)N-2}$, matching a statistical-mechanical
prediction by Torquato and Stillinger~\cite{TorquatoStillinger}.  This is a
limitation
on the linear-programming method, rather than a known packing construction.
For fixed dimensions, the optimal sphere-packing density is known exactly only
in dimensions $1,2,3,8,$ and $24$; the one-dimensional case is immediate, and
the remaining cases were established by Fejes T\'oth~\cite{FejesToth},
Hales~\cite{Hales}, Viazovska~\cite{Viazovska}, and Cohn, Kumar, Miller,
Radchenko, and Viazovska~\cite{CKMRV}.  By contrast, Cohn and
Kumar~\cite{CohnKumar} noted that the densest lattices are known in every
dimension up to $8$ and also in dimension $24$, and conjectured that lattice
packings are suboptimal in all sufficiently high dimensions.

Our proof follows the structure of Klartag's~\cite{Klartag} paper quite
closely, with additional modifications needed to make the cyclotomic symmetries
work.  We advise the reader to read it first, as the same basic mechanism
appears there without the complications introduced by the symmetries.

Klartag's~\cite{Klartag} bound $\Delta_N^{\rm lat}\ge cN^2\,2^{-N}$
is obtained by choosing a random lattice, starting with a Euclidean ball
containing no nonzero points of that lattice, and letting it evolve
stochastically through ellipsoids.  Whenever the boundary of the evolving
ellipsoid reaches a nonzero lattice point, the future motion is constrained to
keep that point on the boundary.  Here is some intuition for this process.  One
may heuristically regard the lattice points as a Poisson process of intensity
one, so that each constant amount of newly explored volume should reveal a new
lattice point.
Each new contact imposes only one linear constraint on the quadratic form
defining the ellipsoid, while an ellipsoid has on the order of $N^2$ degrees
of freedom.  The process should therefore have room to accumulate on the order
of $N^2$ contact points before it freezes, suggesting that it can reach volume
on the order of $N^2$ while its interior remains free of nonzero lattice
points.  The ellipsoid does not grow monotonically: it expands and contracts
as it moves.  The key point is that the concavity of $\log\det$ produces a
favorable determinant drift whose magnitude is governed by the number of
degrees of freedom that remain after the accumulated contact constraints are
imposed.  Controlling this drift makes the heuristic rigorous and yields the
quadratic gain.

An earlier construction of Venkatesh~\cite{Venkatesh} supplies a complementary
idea, which we combine here with Klartag's process.
Related orbit-saving ideas appear
in work of Gargava~\cite{Gargava}, Gargava, Serban and Viazovska~\cite{GSV}, and
Gargava and
Viazovska~\cite{GargavaViazovska}.
Let us explain below how the cyclotomic symmetries interact with the stochastic ellipsoids.

The
cyclotomic module lattices carry a symmetry group of order $m$, and every
nonzero lattice point belongs to an orbit of $m$ points.  If the ellipsoid is
required to preserve this symmetry, then reaching one lattice point means
reaching its entire orbit.  More importantly, keeping that whole orbit on the
boundary imposes only the same single condition as keeping one of its points
there.
Thus each new contact gives $m$ lattice points for the price of one.  Taking
the module rank $r=\lceil(\log\varphi(m))^4\rceil$, the process allows the
ellipsoid to grow in volume by a factor of at least $cm\varphi(m)r^2$.  The
ambient dimension is $N=\varphi(m)r$.  Choosing $m$ to be primorial as in
Venkatesh's construction~\cite{Venkatesh} gives
$m\asymp\varphi(m)\log\log N$, which yields the additional $\log\log N$
factor in our bound.

After the appropriate cyclotomic symmetries and module lattices have been set
up, the proof largely follows Klartag's argument.  Restricting the motion to
cyclotomic-invariant ellipsoids leaves
less randomness than in the original process.  Venkatesh's construction uses
modules of rank two, whereas here we let the module rank grow to compensate
for this loss.  See Section~\ref{subsec:angular-block-mass} for this step,
which is a new feature of this proof compared to earlier works.

Throughout the paper $c,C,\ldots>0$ denote universal constants,
whose values may change from line to line unless explicitly fixed.  Subscripted
constants remain fixed once introduced, and all implicit constants below are
universal unless stated otherwise.  As the relevant parameter tends to
infinity, we write $f=O(g)$ if $|f|\le Cg$, $f=o(g)$ if $f/g\to0$, and
$f\sim g$ if $f/g\to1$.  For positive quantities $f$ and $g$, we write
$f\asymp g$ if $cg\le f\le Cg$.

\begin{remark}[Relation to lattice-based cryptography]

It is a great coincidence that the cyclotomic module lattices in our packing
construction are closely related to the lattices used widely in
lattice-based cryptography.  In August 2024, the U.S.\ National Institute of
Standards and Technology approved its first three standards for post-quantum
cryptography; two are module-lattice-based: ML-KEM, a key-encapsulation
mechanism derived from CRYSTALS-Kyber~\cite{BDKLLSSSS}, and ML-DSA, a
digital-signature scheme derived from CRYSTALS-Dilithium~\cite{DKLLSSS}.  The
security of ML-KEM is based on Module Learning With Errors, while that of
ML-DSA is based on Module Learning With Errors and a variant of Module Short
Integer Solution called SelfTargetMSIS~\cite{Nat24a,Nat24b}.  Here the ``ML''
prefix stands for module-lattice.  Motivated by cryptographic applications,
\cite{BPTW} studies a related discretization of the natural invariant
measure on random module lattices of fixed rank, analogous to the Haar
probability measure considered in our paper.

In the standardized parameter sets for both schemes, the underlying cyclotomic
polynomial has degree $256$, while the relevant module dimensions are fixed and
at most $8$~\cite{Nat24a,Nat24b}.  Questions have been posed about the effects
of the module rank and cyclotomic degree on lattice
reduction~\cite{DEP}.  Our work indicates that the short vectors seem
to prefer certain directions more than others.  This effect is more prominent
when the rank is small (cf.\ Remark~\ref{re:effect_of_rank}).  This is
reminiscent of the skewness gap studied in~\cite[\S 4.4]{DEP}, which
measures imbalance among the archimedean embeddings of short vectors.
\end{remark}

\section{Cyclotomic lattices}\label{sec:cyclotomic}

\subsection{Cyclotomic lattices and the mean-value formula}

We begin with the arithmetic parameters used throughout the paper.  Let $m>2$
be an integer, let $\zeta_m$ be a primitive $m$th root of unity, and set
$K=\Q(\zeta_m)$.
Thus $K$ is the $m$th cyclotomic field, of degree $\varphi(m)$ over $\Q$,
where $\varphi$ is Euler's totient function.  Its group of $m$th roots of
unity is
\[
        \mu_m=\{\text{$m$th roots of unity}\}\subset K^\times.
\]
This will be the symmetry group of our construction.

We take the growing module rank
\[
        r=\lceil(\log\varphi(m))^4\rceil
\]
and work with the free rank-$r$ $K$-module $V=K^r$.  Its realification
$V_\R=V\otimes_\Q\R$ is a real vector space of dimension
\[
        N=\dim_\R V_\R=\varphi(m)r.
\]
We write $B_N=\{x\in\R^N:\|x\|<1\}$ for the open Euclidean ball of radius
one.

Set $K_\infty=K\otimes_\Q\R$, the archimedean real algebra obtained from $K$
by extending scalars from $\Q$ to $\R$.  The field $K$ is totally imaginary,
so its complex embeddings come in conjugate pairs.  Let
\[
        s=\frac{\varphi(m)}2,
\]
and choose embeddings $\sigma_1,\dots,\sigma_s$, one from each conjugate pair.
They give the decompositions
\[
        K_\infty\cong\prod_{j=1}^s\C,
        \qquad\text{and}\qquad
        V_\R\cong\bigoplus_{j=1}^s\C^r.
\]
We write an element $x\in V_\R$ as $x=(x_1,\dots,x_s)$, where each
$x_j\in\C^r$.  Write $\OK$ for the ring of algebraic integers in $K$.  This
ring plays the role of $\Z$ inside $K$, and $\OK^r$ plays the role of the
standard lattice $\Z^r$.
Equip each factor $\C^r$ with the same scalar multiple of its standard
Euclidean structure, chosen so that the resulting product Euclidean structure
on $V_\R$ satisfies
\[
        \covol(\OK^r)=\vol(B_N).
\]
All norms, Hilbert--Schmidt norms, volumes, and Lebesgue measures below refer
to this structure.  Thus
\[
        \|x\|^2=\sum_{j=1}^s\|x_j\|^2.
\]
This scalar rescaling does not change the block proportions, the orthogonal
decompositions used below, or the orthogonality of the cyclotomic action.

The cyclic group $\mu_m\subset K^\times$ has order $m$.  It acts on $K^r$ by
coordinatewise scalar multiplication:
\[
        \zeta\cdot(y_1,\dots,y_r)
        =(\zeta y_1,\dots,\zeta y_r).
\]
Extending this action $\R$-linearly to $V_\R=K^r\otimes_\Q\R$, and using the
block decomposition above, gives
\[
        \zeta\cdot(x_1,\dots,x_s)
        =\bigl(\sigma_1(\zeta)x_1,\dots,\sigma_s(\zeta)x_s\bigr).
\]
The action is orthogonal for the rescaled Euclidean structure, since
$|\sigma(\zeta)|=1$ for every complex embedding $\sigma$ and every
$\zeta\in\mu_m$.  It is free away from the origin: if $h\in\mu_m$ and $h\ne1$,
then $h-1$ is nonzero in the field $K$, hence multiplication by $h-1$ is
invertible on $K^r$ and on $V_\R$.  Thus $hx=x$ implies $x=0$, and every
nonzero $\mu_m$-orbit has exactly $m$ points.

The product decomposition of $K_\infty$ gives
\[
        \operatorname{SL}_r(K_\infty)
        \cong\prod_{j=1}^s\operatorname{SL}_r(\C).
\]
An element $g\in\operatorname{SL}_r(K_\infty)$ can therefore be written as a
tuple
\[
        g=(g_1,\dots,g_s),
\]
where each $g_j\in\operatorname{SL}_r(\C)$,
and its action on $V_\R\cong\bigoplus_{j=1}^s\C^r$ is simply
\[
        g\cdot(x_1,\dots,x_s)=(g_1x_1,\dots,g_sx_s).
\]
Thus $\operatorname{SL}_r(K_\infty)$ is the group of determinant-one
$K_\infty$-linear changes of coordinates.  Applying such a change of
coordinates to the standard module $\OK^r$ gives a lattice
\[
        \Lambda_g=g\OK^r\subset V_\R.
\]

A matrix $\gamma\in\operatorname{SL}_r(\OK)$ merely changes the $\OK$-basis of
$\OK^r$, so $\gamma\OK^r=\OK^r$.  Consequently,
\[
        \Lambda_{g\gamma}=g\gamma\OK^r=g\OK^r=\Lambda_g.
\]
Thus the map $g\mapsto\Lambda_g$ factors through the left cosets
$g\operatorname{SL}_r(\OK)$, and the parameter space for these lattices is
the quotient
\[
        \operatorname{SL}_r(K_\infty)/\operatorname{SL}_r(\OK).
\]
Each coset determines a lattice $\Lambda_g$.  By the
Borel--Harish-Chandra~\cite{BorelHarishChandra} theorem, this quotient has
finite $\operatorname{SL}_r(K_\infty)$-invariant volume; we normalize Haar
measure to make it a probability measure, denoted $\mu$.

Since $g$ is $K_\infty$-linear, it commutes with scalar multiplication by
$\mu_m$, so $\Lambda_g$ is $\mu_m$-stable.  Moreover,
\[
        \det\nolimits_\R(g)=\prod_{j=1}^s|\det\nolimits_\C(g_j)|^2=1.
\]
Thus $g$ preserves covolume, and hence
\[
        \covol(\Lambda_g)=\covol(\OK^r)=\vol(B_N).
\]

For later use we record the number-field Siegel mean formula.  Related work on
number-field module lattices
includes higher-moment formulas
of Gargava, Serban, and Viazovska~\cite{GSV}, as well
as shortest-vector estimates of Gargava, Serban, Viazovska, and
Viglino~\cite{GSVV}.  The formula below appears
in the rank-two setting in Venkatesh~\cite{Venkatesh} and in the
division-algebra setting in Gargava~\cite[Theorem~16]{Gargava}.  The
form needed here also follows by taking the first-moment case in
Kim~\cite[the corollary following Theorem~1.2 and subsequent discussion]{Kim}.
Kim first states the corresponding corollary under a class-number-one
assumption and then explains that the same formula holds on any fixed
Steinitz-class component.  We apply this extension to the component containing
the free module $\OK^r$.

\begin{theorem}\label{thm:siegel}
Let $r\ge2$.  Recall that $\Lambda_g=g\OK^r$, and let $\mu$ denote the
normalized $\operatorname{SL}_r(K_\infty)$-invariant probability measure
on $\operatorname{SL}_r(K_\infty)/\operatorname{SL}_r(\OK)$.  Then, for every
nonnegative Borel function $f:V_\R\to[0,\infty]$,
\[
        \int_{\operatorname{SL}_r(K_\infty)/\operatorname{SL}_r(\OK)}
        \sum_{0\ne v\in\Lambda_g}f(v)\,d\mu(g)
        =\frac1{\covol(\OK^r)}\int_{V_\R}f(x)\,dx.
\]
Here $dx$ is the Lebesgue measure for the Euclidean structure fixed above.
\end{theorem}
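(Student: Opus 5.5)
The plan is to follow the classical Siegel/Weil unfolding argument, adapted to $K$-modules.

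First, define the functional
\[
T(f)=\int\sum_{0\ne v\in\Lambda_g}f(v)\,d\mu(g)
\]
on nonnegative Borel functions, and more generally on $f\in C_c(V_\R)$. For $f\in C_c$ it is finite: a reduction-theory bound (Siegel sets for $\operatorname{SL}_r(\OK)$) shows that the lattice-point count $\#(\Lambda_g\cap\operatorname{supp}f)$ is integrable against $\mu$. Given finiteness, $T$ is a positive linear functional, so it is represented by a Radon measure $\nu$ on $V_\R\setminus\{0\}$. The $\operatorname{SL}_r(K_\infty)$-invariance of $\mu$ gives $T(f\circ h)=T(f)$ for every $h\in\operatorname{SL}_r(K_\infty)$, so $\nu$ is $\operatorname{SL}_r(K_\infty)$-invariant.

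Second, classify the invariant measures. Since $r\ge2$, each $\operatorname{SL}_r(\C)$ acts transitively on $\C^r\setminus\{0\}$. Hence $\operatorname{SL}_r(K_\infty)$ acts on $V_\R$ with open dense orbit
\[
U=\{x:\ x_j\ne0\text{ for all }j\},
\]
and all other orbits lie in the null set $V_\R\setminus U$, a finite union of proper subspaces. On $U\cong G/H$, with $H$ the stabilizer of a point (unimodular, being a semidirect product of $\prod_j\operatorname{SL}_{r-1}(\C)$ with a vector group), the invariant Radon measure is unique up to scalar, so $\nu|_U=\alpha\,dx$. I then need to rule out mass on the lower-dimensional orbits $\{x_j=0\text{ for }j\in S\}$. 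This comes from arithmetic: a nonzero $v\in\OK^r$ has all embeddings nonzero, since $\sigma_j(v)=0$ forces $v=0$. So $g v\in U$ for every $g$, and $\nu$ gives no mass to $V_\R\setminus U$. Thus $T(f)=\alpha\int f\,dx$.

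Third, identify $\alpha=1/\covol(\OK^r)$. One route is the standard fibration argument. Decompose $\Lambda_g\setminus\{0\}$ into primitive vectors times nonzero scalars in $\OK$; more cleanly, sum over $\operatorname{SL}_r(\OK)$-orbits of nonzero vectors. Each such orbit is indexed by an ideal class datum, but the component containing free modules gives the orbit of $e_1$ together with $\OK$-multiples, and there is a subtlety with non-principal ideals. Unfold the integral over the orbit of $e_1$ to
\[
\int_{G/\Gamma_{e_1}}f(ge_1),
\]
and write $\Gamma_{e_1}\backslash$ the parabolic as an $\operatorname{SL}_{r-1}(\OK)$ fibration with compact torus fiber $K_\infty^{r-1}/\OK^{r-1}$. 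Comparing Haar normalizations then expresses $\alpha$ through ratios of volumes of $\operatorname{SL}_k(K_\infty)/\operatorname{SL}_k(\OK)$. The alternative, which I would actually use for efficiency, is a scaling test: apply the formula to $f=\mathbf 1_{tB}$ and let $t\to\infty$. By equidistribution of lattice points, $\#(\Lambda_g\cap tB)\sim\vol(tB)/\covol(\Lambda_g)$ pointwise, with $\covol(\Lambda_g)=\covol(\OK^r)$ for all $g$. Justifying the exchange of limit and integral needs a dominating bound such as $\#(\Lambda_g\cap tB)/t^N\le C(1+\lambda_1(\Lambda_g)^{-N})$. The integrability of this bound is the same reduction-theory input as in the first step; restricted to small $\lambda_1$, I would get it by the Rogers-type trick of Schmidt, bounding $\mu(\lambda_1<\epsilon)$ via $T(\mathbf 1_{\epsilon B})$.

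I expect the main obstacle to be this integrability/finiteness estimate, namely controlling cusps of $\operatorname{SL}_r(K_\infty)/\operatorname{SL}_r(\OK)$ when the class number exceeds one. In practice I would cite Kim's corollary on the free-module Steinitz component, as the paper indicates, and present the steps above as the structural reason the constant is exactly $1/\covol(\OK^r)$.
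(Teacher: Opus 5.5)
The paper does not prove Theorem~\ref{thm:siegel}. It quotes it as the first-moment case of Kim's formula on the Steinitz component of the free module $\OK^r$, with Venkatesh's rank-two and Gargava's division-algebra versions as precedents. Your closing fallback is therefore exactly the paper's route.

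The self-contained Weil-type sketch you give first is a different route, and Steps 1--2 are sound. Granted the reduction-theoretic finiteness, $T$ is an $\operatorname{SL}_r(K_\infty)$-invariant Radon measure, and the open orbit $U\cong G/H$ has unimodular stabilizer. Your observation that every nonzero vector of $\Lambda_g$ lies in $U$, because each $\sigma_j$ is injective, correctly rules out mass on the smaller orbits. In route (a), however, even the free lattice $\OK^r$ contains vectors of every content ideal $\sum_i v_i\OK$. These ideals index the $\operatorname{SL}_r(\OK)$-orbits on $\OK^r\setminus\{0\}$, and vectors of non-principal content are not $\OK$-multiples of unimodular vectors. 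So the unfolding must run over all nonzero ideals, not just the orbit of $e_1$.

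The step that actually fails is the normalization in route (b): $C(1+\lambda_1^{-N})$ is not $\mu$-integrable, so it cannot serve as a dominating function. The tail bound $\mu(\lambda_1<\epsilon)\le T(\mathbf 1_{\epsilon B})=\alpha\vol(B_N)\epsilon^N$ from the Rogers--Schmidt trick is exactly borderline for it. Indeed, on $\{\lambda_1<\epsilon\}$ the packing bound gives $\#(\Lambda_g\cap\epsilon B\setminus\{0\})\le(3\epsilon/\lambda_1)^N$. Hence, for every $\epsilon>0$,
\[
\alpha\vol(B_N)\epsilon^N=T(\mathbf 1_{\epsilon B})\le 3^N\epsilon^N\int_{\{\lambda_1<\epsilon\}}\lambda_1^{-N}\,d\mu .
\]
Since $\alpha>0$, integrability of $\lambda_1^{-N}$ would force the right-hand integral to tend to $0$ as $\epsilon\to0$, a contradiction. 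Fatou alone gives only $\alpha\ge 1/\covol(\OK^r)$.

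The reverse inequality needs a sharper count. For $t\ge1$, use Henk's bound (or a reduced-basis count) $\#(\Lambda\cap tB)\le 2^{N-1}\prod_{i=1}^N(1+2t/\lambda_i)$, together with Minkowski's second theorem $\prod_i\lambda_i\ge 2^N/N!$ (the covolume being $\vol(B_N)$). These give $t^{-N}\#(\Lambda_g\cap tB)\le C_N\prod_i\max(1,\lambda_i^{-1})\le C_N'(1+\lambda_1^{-(N-1)})$. This holds because either all $\lambda_i<1$, in which case the product is at most $N!/2^N$, or at most $N-1$ factors exceed $1$, each being at most $\lambda_1^{-1}$. The same $\epsilon^N$ tail now makes $\lambda_1^{-(N-1)}$ integrable, and dominated convergence gives $\alpha=1/\covol(\OK^r)$.
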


\subsection{Invariant quadratic forms}

We use a space of quadratic forms compatible with the cyclotomic structure.  Here
$\Sym V_\R$ denotes the real vector space of symmetric bilinear forms on
$V_\R$, identified with self-adjoint matrices using the Euclidean structure
fixed above.  Set
\[
        \mathcal Q_K
        =\{A\in\Sym V_\R:A\text{ is }K_\infty\text{-linear}\}.
\]
Under the decomposition $V_\R\cong\bigoplus_{j=1}^s\C^r$, this is the space
of block-diagonal self-adjoint operators
\[
        \mathcal Q_K\cong\bigoplus_{j=1}^s\Herm(\C^r),
\]
where $\Herm(\C^r)$ denotes the real vector space of Hermitian $r\times r$
matrices.  In particular,
\[
        \dim_\R\mathcal Q_K
        =sr^2=\frac{\varphi(m)r^2}{2}=\frac{Nr}{2}.
\]
Every $A\in\mathcal Q_K$ is $\mu_m$-invariant.  Indeed, $A$ commutes with
scalar multiplication by $\zeta\in\mu_m$, and this scalar multiplication is
orthogonal.  Hence
\[
        A[\zeta x]
        =\langle A\zeta x,\zeta x\rangle
        =\langle\zeta Ax,\zeta x\rangle
        =\langle Ax,x\rangle
        =A[x].
\]
Thus every positive-definite $A\in\mathcal Q_K$ defines a $\mu_m$-stable
ellipsoid.  Also, $\Id\in\mathcal Q_K$, as required for the scalar initial
matrix in the stochastic construction.  We equip $\Sym V_\R$ with the
Hilbert--Schmidt inner product
\[
        \langle A,B\rangle_{\HS}=\Tr(AB).
\]

The following arithmetic lattice theorem is the main technical result of the
paper.  After showing that it implies Theorem~\ref{thm:main}, the remainder of
the paper is devoted to its proof.

\begin{theorem}[Arithmetic lattice ellipsoid]\label{thm:arithmetic-lattice}
There is a universal constant $c>0$ such that,
for all sufficiently large integers $m$, there exist a $\mu_m$-stable lattice
$\Lambda\subset V_\R$ of covolume $\vol(B_N)$ and an ellipsoid $E\subset V_\R$
such that
\[
        E\cap\Lambda=\{0\}
        \qquad\text{and}\qquad
        \vol(E)\ge cmNr\vol(B_N).
\]
\end{theorem}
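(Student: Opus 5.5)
We run Klartag's stochastic ellipsoid process inside the cone of positive-definite elements of $\mathcal Q_K$, and average over the random lattice using Theorem~\ref{thm:siegel}.

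\textbf{Setup.} Fix $g$ distributed according to $\mu$ and set $\Lambda=\Lambda_g$. Start from $A_0=\lambda_0^{-1}\Id$, where $\lambda_0$ is a small constant chosen so that the ball $\calE_{A_0}=\{x:A_0[x]<1\}$ meets $\Lambda$ only at $0$ with probability bounded below. This is possible by Theorem~\ref{thm:siegel}: the expected number of nonzero lattice points in that ball is $O(\lambda_0^{N/2})$. Evolve $A_t\in\mathcal Q_K$ by
\[
dA_t=P_t\,dW_t ,
\]
where $W_t$ is a Brownian motion on $\mathcal Q_K$ with its Hilbert--Schmidt structure, suitably normalized by $A_t^{1/2}$ as in Klartag. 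Here $P_t$ is the orthogonal projection onto the subspace
\[
\{B\in\mathcal Q_K : B[v]=0 \text{ for all contact points } v \text{ found so far}\}.
\]
The key observation is that $A[\zeta v]=A[v]$ for every $\zeta\in\mu_m$. So one contact constrains only one real linear functional, yet it pins down all $m$ points of the orbit $\mu_m v$, and these are distinct because the action is free. The process freezes once $\Theta(\dim\mathcal Q_K)=\Theta(Nr)$ constraints have accumulated.

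\textbf{Determinant drift.} By It\^o's formula and the concavity of $\log\det$, $\log\det A_t^{-1}$ has drift proportional to $\|P_t(A_t^{-1})\|_{\HS}^2$-type quantities. These scale like the number of remaining degrees of freedom, which is $\dim\mathcal Q_K$ minus the number of constraints. Following Klartag, we run until the number of constraints reaches $c\,Nr$. While the constraint count is below this, a definite fraction of the $Nr/2$ degrees of freedom survive, which gives a lower bound on the expected growth of $\log\vol(\calE_{A_t})$.

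\textbf{Counting contacts.} We bound the expected number of contacts through time $t$ using Theorem~\ref{thm:siegel}. Each orbit swept into the evolving ellipsoid contributes $m$ lattice points but only one constraint. Hence the expected number of constraints is at most $(1/m)$ times the expected number of lattice points swept. By the mean-value formula, that count is at most the swept volume divided by $\covol=\vol(B_N)$. Comparing the two rates, the ellipsoid can reach volume about $m\cdot Nr\cdot\vol(B_N)$ before the number of constraints reaches $cNr$. Finally, a Markov/averaging argument produces a single realization $(\Lambda,E)$ with $E\cap\Lambda=\{0\}$ and the stated volume. $E$ is the open ellipsoid at the stopping time; its interior is lattice-free because the process keeps every contact on the boundary.

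\textbf{Main obstacle.} The hardest step is the drift lower bound in the invariant setting. In Klartag's argument the fluctuations of a quadratic form evaluated at a new contact are controlled because the noise is isotropic on all of $\Sym$. Restricted to $\mathcal Q_K$, the noise only has block Hermitian structure. The quantity $\|P(v\otimes v)\|$ projected to $\mathcal Q_K$ depends on how the mass of $v$ splits among the $s$ blocks $v_j\in\C^r$, and a contact concentrated in a few blocks loses randomness. I would first try to show, using Theorem~\ref{thm:siegel} and concentration on the sphere in $\C^r$, that typical lattice points reached by the process have block masses $\|v_j\|^2$ spread roughly evenly. This \emph{angular block mass} estimate is where the rank $r=\lceil(\log\varphi(m))^4\rceil$ enters: for large $r$, the Gaussian-like distribution of $\|v_j\|^2$ across $s$ blocks has fluctuations of order $r^{-1/2}$. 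A union bound over the $s$ blocks then costs only $\log\varphi(m)$ factors, which this choice of $r$ absorbs. Exceptional contacts with skewed block masses would be handled by showing that their expected number is negligible, again via the mean-value formula applied to the skewed region of $V_\R$.
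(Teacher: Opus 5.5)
\paragraph{Gap 1: the contact count.} The gap is in your ``Counting contacts'' step, which is where the proof actually lives.

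Theorem~\ref{thm:siegel} averages a \emph{fixed} function over lattices. The region swept by $\calE_{A_t}$ depends on $\Lambda$ (through the contact constraints) and on the Brownian path. So ``expected lattice points swept $\le$ swept volume$/\covol$'' is not a legitimate application of it.

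The missing idea is a fixed-vector decoupling. For fixed $0\ne x\in\Lambda$, $A_t[x]-1$ is a martingale with quadratic variation at most $t\|\Pi_K(x\otimes x)\|_{\HS}^2=t\beta(x)^2\|x\|^4$. The reflection principle then gives $\Prob(x\in\partial\calE_{A_t})\le2\Phi\bigl((a_0-\|x\|^{-2})/(\beta(x)\sqrt t)\bigr)$, whatever the lattice is. Sum these bounds over lattice points in the thin shell where contacts can occur (on the operator-norm good event) and divide by $m$. This gives a deterministic lattice function $K_t(\Lambda)$, and only this is averaged by Siegel. The result is $\frac1m\E_\omega J_t(\omega)$ with $J_t(\omega)\le\exp\bigl((1+O(\sqrt{Nt}))\tfrac{Nrt}{8}\,s\sum_j\|\omega_j\|^4\bigr)$.

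``Comparing the two rates'' also cannot be left as a heuristic. The drift makes the volume grow like $e^{Nrt/8}$. The expected number of contact orbits must then be at most $\frac Cm e^{Nrt/8}$ with the \emph{same} constant $1/8$, up to $O(1)$ in the exponent at $t\approx8\log(mNr)/(Nr)$. Your volume count predicts this for every $r$, including $r=2$. But by Jensen, $\E_\omega\exp(\lambda s\sum_j\|\omega_j\|^4)\ge e^{(1+1/r+o(1))\lambda}$, and the method then loses a factor $(mNr)^{1/r}$ (Remark~\ref{re:effect_of_rank}). The heuristic cannot see why $r$ must grow.

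\paragraph{Gap 2: where the block-mass estimate enters.} The block-mass estimate belongs in the contact bound, not in the drift. The drift only uses $\dim F_{A_t}\ge Nr/2-|\partial\calE_{A_t}\cap\Lambda|/m$, which holds pathwise with no reference to directions.

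Your intuition is also reversed. A vector concentrated in few blocks has \emph{larger} $\beta(x)^2=\frac12\sum_jp_j^2$, hence more noise in $A_t[x]$, and is hit sooner. So skewed directions are not negligible merely because they are rare. They carry weight up to $e^{\lambda s\sum_j\|\omega_j\|^4}$, which can be as large as $e^{\lambda s}$ with $\lambda\approx\log(mNr)$. A union bound on the individual $\|\omega_j\|^2$ only controls the bulk.

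What you need is the exponential moment $\E\exp(\lambda s\sum_j\|\omega_j\|^4)\le Ce^\lambda$ for $\lambda\le c\sqrt r$. This holds because $F=(s\sum_j\|\omega_j\|^4)^{1/2}$ is $2\sqrt s$-Lipschitz on $S^{N-1}$ with $N=2sr$, hence sub-Gaussian at scale $r^{-1/2}$ above $\sqrt{1+1/r}$. The choice $r=\lceil(\log\varphi(m))^4\rceil$ also absorbs the shell error $O(\lambda\sqrt{Nt})$ in the exponent.

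\paragraph{Smaller point: the initial radius.} Starting from radius $\sqrt{\lambda_0}$ with $\lambda_0<1$ fixed throws away a factor $\lambda_0^{N/2}$ of volume. The process only runs while $Nt=o(1)$ and gains a polynomial factor, so it cannot recover this. You must start at radius $1-1/N$, which is lattice-free with probability at least $1-e^{-1}$.
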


We explain how this theorem implies the packing bound stated in the
introduction.

\begin{proof}[Proof of Theorem~\ref{thm:main}]
Now specialize to
\[
        m=\prod_{p\le x}p
\]
as $x\to\infty$.  By Theorem~\ref{thm:arithmetic-lattice}, and since
$N=\varphi(m)r$,
\[
        \frac{\vol(E)}{\covol(\Lambda)}
        \ge cmNr
        =cN^2\frac{m}{\varphi(m)}.
\]
The Mertens product theorem and the prime number theorem give
\[
        \frac{m}{\varphi(m)}
        \asymp\log x
        \asymp\log\log m.
\]
Moreover,
$N=\varphi(m)\lceil(\log\varphi(m))^4\rceil$ and
$m/\varphi(m)=O(\log x)$ imply that
$\log N\sim\log m$.  Hence $\log\log N\sim\log\log m$.

The translates of $E/2$ by $\Lambda$ are disjoint.  After an invertible linear
change of variables, they form a lattice sphere packing of density
\[
        \frac{\vol(E/2)}{\covol(\Lambda)}
        \ge cN^2\log\log N\,2^{-N}.
\]
This proves the result along the infinite sequence of dimensions obtained by
this choice of $m$.
\end{proof}

It remains to prove Theorem~\ref{thm:arithmetic-lattice}.  We next record the
pointwise projection formula used in the stochastic estimates.

For $x\ne0$, write
\[
        x=(x_1,\dots,x_s)\in\bigoplus_{j=1}^s\C^r.
\]
For each block, put
\[
        p_j=\frac{\|x_j\|^2}{\|x\|^2}.
\]
Thus $\sum_jp_j=1$.  Let
\[
        \Pi_K:\Sym V_\R\to\mathcal Q_K
\]
denote Hilbert--Schmidt orthogonal projection.

\begin{lemma}[Projected rank-one norm]\label{lem:projection}
For every nonzero $x\in V_\R$,
\[
        \left\|\Pi_K(x\otimes x)\right\|_{\HS}^2
        =\frac12\sum_{j=1}^s\|x_j\|^4.
\]
Consequently, if
\[
        \beta(x)^2=
        \frac{\left\|\Pi_K(x\otimes x)\right\|_{\HS}^2}{\|x\|^4},
\]
then
\[
        \beta(x)^2=\frac12\sum_{j=1}^s p_j^2.
\]
Consequently,
\[
        \frac1{2s}\le \beta(x)^2\le\frac12.
\]
In particular, $\beta(x)>0$ for every nonzero $x$.
\end{lemma}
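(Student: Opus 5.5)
The plan is to write $\Pi_K$ explicitly as an average over a compact group of orthogonal conjugations. Then compute its value on the rank-one operator $x\otimes x$ block by block.

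\emph{Step 1: $\Pi_K$ as a group average.} Let $T=(\R/2\pi\Z)^s$ act on $V_\R\cong\bigoplus_j\C^r$ by $u_\theta(x_1,\dots,x_s)=(e^{i\theta_1}x_1,\dots,e^{i\theta_s}x_s)$. Each $u_\theta$ is orthogonal for the fixed Euclidean structure, since it is a unitary scalar on each block. So conjugation $A\mapsto u_\theta A u_\theta^{-1}$ is an HS-isometry of $\Sym V_\R$. The averaging map $P(A)=\int_T u_\theta A u_\theta^{-1}\,d\theta$ (normalized Haar measure) is therefore a self-adjoint idempotent, i.e.\ the orthogonal projection onto the fixed subspace.

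That fixed subspace is $\mathcal Q_K$. Commuting with all $u_\theta$ forces $A$ to preserve each block $\C^r$, using independent phases across blocks. Within each block, $A$ must commute with multiplication by $i$. Together these say exactly that $A$ is $K_\infty=\C^s$-linear. Hence $\Pi_K=P$.

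\emph{Step 2: evaluating on $x\otimes x$.} Write $x\otimes x=\sum_{j,k}x_j\otimes x_k$, viewed as real operators $\C^r\to\C^r$ between blocks. Conjugation by $u_\theta$ sends $x_j\otimes x_k$ to $(e^{i\theta_j}x_j)\otimes(e^{i\theta_k}x_k)$. For $j\ne k$ these terms average to zero, which follows by first integrating over the phase difference. For $j=k$, average $(e^{i\phi}y)\otimes(e^{i\phi}y)$ over $\phi$ with $y=x_j$. Writing $e^{i\phi}y=\cos\phi\,y+\sin\phi\,iy$, the cross terms vanish, and the average is $\tfrac12\bigl(y\otimes y+(iy)\otimes(iy)\bigr)$.

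Since $\langle y,iy\rangle_\R=\operatorname{Re}\langle y,iy\rangle_{\C}=0$ and $\|iy\|=\|y\|$, this operator is $\tfrac12\|y\|^2$ times the orthogonal projection onto a 2-plane. Its HS norm squared is therefore $\tfrac14\cdot2\|y\|^4=\tfrac12\|y\|^4$. The blocks are HS-orthogonal, so summing gives $\|\Pi_K(x\otimes x)\|_{\HS}^2=\tfrac12\sum_j\|x_j\|^4$.

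\emph{Step 3: consequences.} Dividing by $\|x\|^4$ gives $\beta(x)^2=\tfrac12\sum_jp_j^2$. Since $\sum_jp_j=1$ with $p_j\ge0$, we have $\sum_jp_j^2\le(\sum_jp_j)^2=1$. Cauchy--Schwarz gives $\sum_jp_j^2\ge1/s$. This yields $\frac1{2s}\le\beta(x)^2\le\frac12$, and in particular $\beta(x)>0$.

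The only point needing care is Step 1: identifying the fixed subspace exactly with $\mathcal Q_K$, and checking that orthogonality of $u_\theta$ holds for the rescaled structure. Both are immediate since the rescaling is the same scalar on every block.
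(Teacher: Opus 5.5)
Your proof is correct and is essentially the paper's argument: both realize the projection on each diagonal block as a phase average giving $\frac12\bigl(y\otimes y+(iy)\otimes(iy)\bigr)$, use $y\perp iy$ to get the HS norm $\frac12\|y\|^4$, and deduce the bounds from $\frac1s\le\sum_j p_j^2\le1$. The only difference is cosmetic. You average over the full torus $(\R/2\pi\Z)^s$, which removes the off-diagonal blocks and the non-Hermitian part in one step. The paper averages over the order-four group generated by $i$ within each block, and handles the off-diagonal blocks separately by noting they are orthogonal to the block-diagonal space $\mathcal Q_K$.
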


\begin{proof}
On one block $\C^r$, the orthogonal projection of $u\otimes u$ onto the
Hermitian operators is
\[
        \frac12\bigl(u\otimes u+(iu)\otimes(iu)\bigr).
\]
Indeed, the displayed expression is the average of $u\otimes u$ under the
order-four group generated by multiplication by $i$.  Since $u\perp iu$,
\[
        \left\|\frac12\bigl(u\otimes u+(iu)\otimes(iu)\bigr)\right\|_{\HS}^2
        =\frac14\|u\otimes u\|_{\HS}^2+
         \frac14\|(iu)\otimes(iu)\|_{\HS}^2
        =\frac12\|u\|^4.
\]
Since every operator in $\mathcal Q_K$ is block diagonal, the off-diagonal
blocks of $x\otimes x$ are orthogonal to $\mathcal Q_K$ and hence vanish under
$\Pi_K$.
Summing over the $s$ diagonal blocks gives the claimed identity.  The bounds
on $\beta(x)^2$ follow from
\[
        \frac1s\le\sum_{j=1}^sp_j^2\le1.\qedhere
\]
\end{proof}

\section{Stochastic ellipsoid evolution}

We continue to use the global notation and Euclidean structure fixed in
Section~\ref{sec:cyclotomic}.  This section records
the subspace version of the stochastic estimates used later.  The pathwise
construction is the contact-preserving process from~\cite[Proposition~2.3 and
Corollary~2.4]{Klartag}, with the full Euclidean space of symmetric matrices
replaced by $\mathcal Q_K$.  The modifications used here are that
the Brownian motion is constrained inside this space, and the loss of Brownian
directions is counted by $\mu_m$-orbits of contact points rather than by
individual contact points.

In this section $\Lambda\subset V_\R$ is a fixed $\mu_m$-stable lattice of covolume $\vol(B_N)$.  For a positive-definite $A\in\mathcal Q_K$ define the ellipsoid
\[
        \calE_A=\{x\in V_\R:A[x]<1\},
\]
where $A[x]=\langle Ax,x\rangle$.
We say that $A$ is $\Lambda$-free if
\[
        \calE_A\cap\Lambda=\{0\}.
\]

\begin{lemma}\label{lem:standard-geometric}
Let $A\in\Sym V_\R$ be positive definite and $\Lambda$-free.
\begin{itemize}
\item[(i)] \cite[Appendix~A, Lemma~A.1]{Klartag}
$|\partial\calE_A\cap\Lambda|\le 2(2^N-1)$.
\item[(ii)] (Minkowski's first theorem)
$\vol(\calE_A)\le 2^N\covol(\Lambda)=2^N\vol(B_N)$.
Equivalently, in our normalization, $\det A\ge4^{-N}$.
\end{itemize}
\end{lemma}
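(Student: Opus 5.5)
Part (i) is cited from Klartag, so the plan is to recall that argument. Part (ii) is a direct application of Minkowski's first theorem together with a determinant computation.

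For (i), suppose $\partial\calE_A\cap\Lambda$ had more than $2(2^N-1)$ points. Consider the reduction map $\Lambda\to\Lambda/2\Lambda$, whose target has $2^N$ classes; the nonzero classes number $2^N-1$. By pigeonhole, some nonzero class contains at least three boundary points. Since boundary points come in pairs $\pm v$, such a class contains two points $u,v$ with $u\ne\pm v$. Then $(u\pm v)/2$ lie in $\Lambda$, and at least one of them is nonzero because $u\ne\pm v$. Strict convexity of the quadratic form $A[\cdot]$ gives
\[
A\bigl[\tfrac{u+v}{2}\bigr]+A\bigl[\tfrac{u-v}{2}\bigr]=\tfrac12\bigl(A[u]+A[v]\bigr)=1
\]
by the parallelogram law. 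If both midpoints are nonzero, then both have $A$-value strictly less than $1$, which contradicts $\Lambda$-freeness. If one vanishes, say $u=-v$, this contradicts $u\ne\pm v$.

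More carefully, I would also need to exclude the case where one midpoint has $A$-value $1$ and the other has $A$-value $0$. Value $0$ forces that midpoint to be zero by positive definiteness, which again means $u=\pm v$. So every nonzero class contains at most the pair $\pm v$, giving at most $2(2^N-1)$ points in total. I expect this bookkeeping to be the only delicate point.

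For (ii), $\calE_A$ is an open, convex, symmetric set, and $\Lambda$-freeness says that its only lattice point is $0$. Minkowski's first theorem therefore gives $\vol(\calE_A)\le2^N\covol(\Lambda)$; the open-set form of the theorem applies directly with non-strict inequality. With $\covol(\Lambda)=\vol(B_N)$ and $\vol(\calE_A)=\vol(B_N)(\det A)^{-1/2}$, this becomes $(\det A)^{-1/2}\le2^N$, that is, $\det A\ge4^{-N}$. Each step is reversible, which gives the stated equivalence.
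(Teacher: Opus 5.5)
Your proposal is correct and follows the same route as the paper, which simply cites Klartag's Lemma~A.1 for (i) and Minkowski's first theorem for (ii). Your parity argument modulo $2\Lambda$ with the parallelogram identity is the classical proof of that bound, and (ii) is the contrapositive of Minkowski's theorem combined with $\vol(\calE_A)=(\det A)^{-1/2}\vol(B_N)$. The one step you leave implicit is that no boundary point lies in the zero class $2\Lambda$ (if $x=2y$ with $A[x]=1$, then $0\ne y\in\Lambda$ has $A[y]=1/4<1$), and you need this to run the pigeonhole over only $2^N-1$ classes.
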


Following the notation in~\cite[equation~(13)]{Klartag}, for
$A\in\mathcal Q_K$ define the contact-preserving, or active, direction
space
\begin{equation}
        F_A=\{B\in\mathcal Q_K:B[x]=0
        \text{ for every }x\in\partial\calE_A\cap\Lambda\}.
\label{eq:FA}
\end{equation}
Thus $B\in F_A$ precisely when additive perturbations in the direction $B$
keep every current contact point on the boundary.
Write $\Pi_A:\Sym V_\R\to F_A$ for Hilbert--Schmidt orthogonal projection.
This definition and the arguments below are essentially the same as
in~\cite{Klartag}, except that we restrict throughout to $\mathcal Q_K$.

\begin{proposition}[Invariant-subspace stochastic process]\label{prop:process}
Let $a_0>0$ be such that $a_0\Id$ is $\Lambda$-free, and let $(W_t)_{t\ge0}$
be a Brownian motion in $\mathcal Q_K$.  There exists a continuous
process $(A_t)_{t\ge0}$ with values in $\mathcal Q_K$, adapted to the
filtration associated with $(W_t)_{t\ge0}$ and starting at $A_0=a_0\Id$, with
the following properties.
\begin{itemize}
\item[(i)] For every $t\ge0$, the matrix $A_t$ is positive definite and $\Lambda$-free.
\item[(ii)] Contact points are preserved: if $x\in\partial\calE_{A_t}\cap\Lambda$ and $s\ge t$, then $x\in\partial\calE_{A_s}\cap\Lambda$.
\item[(iii)] The process satisfies $dA_t=\Pi_{A_t}(dW_t)$ until the first time
at which $F_{A_t}=\{0\}$, after which it is constant.
\end{itemize}
\end{proposition}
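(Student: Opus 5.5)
We will construct $(A_t)$ piecewise, by concatenating solutions of linear SDEs on a sequence of random time intervals. This follows the proof of \cite[Proposition~2.3]{Klartag}, with $\Sym V_\R$ replaced throughout by $\mathcal Q_K$. Since $\mathcal Q_K$ is a linear subspace containing $\Id$, and $F_A\subseteq\mathcal Q_K$, every increment $\Pi_{A}(dW_t)$ stays in $\mathcal Q_K$. Hence the process never leaves $\mathcal Q_K$. This is the only structural input needed beyond Klartag's argument.

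\textbf{Construction on one interval.} Set $\tau_0=0$ and $A_0=a_0\Id$. Suppose $A_{\tau_k}$ has been constructed and is positive definite and $\Lambda$-free, with contact set $S_k=\partial\calE_{A_{\tau_k}}\cap\Lambda$. By Lemma~\ref{lem:standard-geometric}(i), $S_k$ is finite. Let $F_k=F_{A_{\tau_k}}$, which depends only on $S_k$. If $F_k=\{0\}$, we stop and keep the process constant from then on.

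Otherwise, for $t\ge\tau_k$ put
\[
A_t=A_{\tau_k}+\Pi_{F_k}(W_t-W_{\tau_k}).
\]
This is an explicit continuous adapted process. Since every $B\in F_k$ satisfies $B[x]=0$ for all $x\in S_k$, we get $A_t[x]=1$ for each $x\in S_k$, so the old contacts are preserved.

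Define $\tau_{k+1}$ as the first time $t>\tau_k$ at which either $A_t$ ceases to be positive definite, or a new lattice point enters the boundary, that is, $A_t[y]=1$ for some $y\in\Lambda\setminus(S_k\cup\{0\})$.

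\textbf{Positivity and freeness on the interval.} Positive definiteness is preserved up to $\tau_{k+1}$ by continuity of the eigenvalues. To show $A_t$ remains $\Lambda$-free on $[\tau_k,\tau_{k+1}]$, we use a compactness argument. By Minkowski (Lemma~\ref{lem:standard-geometric}(ii)), $\det A_t\ge4^{-N}$ as long as the matrix is $\Lambda$-free. On a bounded time interval $A_t$ is bounded, so its eigenvalues are bounded above, and together with the determinant bound they are bounded below away from $0$. The ellipsoids are therefore contained in a fixed ball, which contains only finitely many lattice points. A point $y$ can leave the region $A[y]\ge1$ only by first crossing $A[y]=1$, which is exactly a new contact.

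Positivity cannot fail first. The determinant bound holds up to and including $\tau_{k+1}$ by continuity, and this bound prevents an eigenvalue from reaching $0$. So $\tau_{k+1}$ is always a new-contact time, and $A_{\tau_{k+1}}$ is positive definite and $\Lambda$-free with $S_{k+1}\supsetneq S_k$.

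\textbf{Finitely many stages and the conclusion.} The main obstacle is to ensure that the stopping times do not accumulate, and that on each interval $\tau_{k+1}>\tau_k$ almost surely. For the second point, the finitely many relevant points $y$ satisfy $A_{\tau_k}[y]>1$ strictly, because they are not contacts. By continuity, $\tau_{k+1}>\tau_k$.

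Accumulation is ruled out as follows. The contact set strictly increases at each stage, and by Lemma~\ref{lem:standard-geometric}(i) it has at most $2(2^N-1)$ elements. Hence there are at most $2^{N+1}$ stages. After the last stage, either $F=\{0\}$ and the process is constant, or no new contact ever occurs, in which case the formula of the final stage runs forever.

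Property (ii) follows because each $S_k\subseteq S_{k+1}$. Property (iii) holds because on $[\tau_k,\tau_{k+1})$ the set $F_{A_t}=F_k$ is constant, so $dA_t=\Pi_{A_t}(dW_t)$ there. Adaptedness holds since each $\tau_k$ is a hitting time of a closed set by a continuous adapted process, hence a stopping time. One technical point remains: we must check that $\tau_{k+1}<\infty$ or $=\infty$ is handled measurably. As in \cite[Corollary~2.4]{Klartag}, we would check this by citing the strong Markov property of $W$ at $\tau_k$.
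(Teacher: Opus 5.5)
Your proof is correct, and it is the recursive construction that the paper takes from Klartag: run $A_{\tau_k}+\Pi_{F_k}(W_t-W_{\tau_k})$ until a new contact occurs, use $\det A_t\ge 4^{-N}$ together with boundedness on compact time intervals to show positivity cannot fail first, and bound the number of phases by the contact bound. You write this out in full where the paper only cites it.

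The one substantive difference is how a phase with $F_{A_{\tau_k}}\neq\{0\}$ is handled. The paper's proof consists essentially of checking Klartag's Brownian exit argument. Choose $0\ne B\in F_A$ and $x$ with $B[x]\ne 0$; then the scalar process $\langle W_t,\Pi_A(x\otimes x)\rangle_{\HS}$ is a nondegenerate Brownian motion with $\liminf=-\infty$. Positivity would therefore eventually fail, so a new contact occurs almost surely in finite time.

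You never need this, because you let the last phase run forever if no new contact occurs. This is legitimate:
\begin{itemize}
\item your deterministic compactness argument already gives positive definiteness and $\Lambda$-freeness at every finite time on that event;
\item (i)--(iii) as stated do not claim that the process freezes in finite time;
\item by the exit argument, that event is in fact null.
\end{itemize}
What the exit argument adds is the fact that the process almost surely reaches $F_{A_t}=\{0\}$ in finite time. The rest of the paper never uses this, so your version suffices.

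Your closing appeal to the strong Markov property is unnecessary: $\{\tau_{k+1}<\infty\}$ is measurable simply because $\tau_{k+1}$ is a stopping time. The claim that $\tau_{k+1}$ is a hitting time of a closed set deserves one line of justification. The union of infinitely many hyperplanes $\{A[y]=1\}$ is closed only together with the non-positive-definite set, by the same local finiteness you use elsewhere.
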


\begin{proof}
We apply the construction of~\cite[Lemmas~2.1 and~2.2,
Proposition~2.3, and Corollary~2.4]{Klartag}, replacing the full
symmetric-matrix space by the fixed Euclidean subspace $\mathcal Q_K$.  This
space contains $\Id$, the active space $F_A$ in~\eqref{eq:FA} is the
intersection with $\mathcal Q_K$ of the space in~\cite{Klartag}, and
Lemma~\ref{lem:standard-geometric} supplies the required bounds on lattice-free
ellipsoids.  The measurability and stopping-time arguments are unchanged,
since $\mathcal Q_K$ is fixed and finite dimensional and $\Lambda$ is
countable.

It remains only to check the Brownian exit argument in
\cite[Lemma~2.2]{Klartag}.  If $F_A\ne\{0\}$, choose $0\ne B\in F_A$ and then
$x\in V_\R$ with $B[x]\ne0$.  Since
\[
        \langle B,x\otimes x\rangle_{\HS}=B[x]\ne0,
\]
the projection $\Pi_A(x\otimes x)$ is nonzero.  Hence the corresponding
scalar component of Brownian motion in $F_A$ has $\liminf=-\infty$ almost
surely, as required.  Klartag's recursive construction therefore gives a
continuous, adapted, positive-definite, $\Lambda$-free, contact-preserving
process driven by Brownian motion $W_t$ in $\mathcal Q_K$.  It has only finitely many
phases, since each nontrivial phase adds a contact point and
Lemma~\ref{lem:standard-geometric} bounds the total number of contacts.
The It\^o formulation in~\cite[Corollary~2.4]{Klartag} gives
$dA_t=\Pi_{A_t}(dW_t)$, completing the proof.
\end{proof}

\begin{lemma}[Dimension loss by orbit contacts]\label{lem:Fdimension}
For every $t$,
\[
        \dim F_{A_t}\ge
        \frac{Nr}{2}-\frac{|\partial\calE_{A_t}\cap\Lambda|}{m}.
\]
\end{lemma}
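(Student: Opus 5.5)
The plan is to view $F_{A_t}$ as the kernel of a linear map from $\mathcal Q_K$ and to show that each $\mu_m$-orbit of contact points contributes only one linear constraint. By definition \eqref{eq:FA}, $F_{A_t}$ is the common kernel, inside $\mathcal Q_K$, of the linear functionals
\[
        \ell_x:B\mapsto B[x]=\langle B,x\otimes x\rangle_{\HS},
        \qquad x\in\partial\calE_{A_t}\cap\Lambda .
\]
Since $\dim_\R\mathcal Q_K=Nr/2$, we get $\dim F_{A_t}\ge Nr/2-d$, where $d$ is the dimension of the span of the functionals $\ell_x$. It therefore suffices to show $d\le|\partial\calE_{A_t}\cap\Lambda|/m$.

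First I will check that the contact set is a union of full $\mu_m$-orbits. The lattice $\Lambda$ is $\mu_m$-stable by assumption, and $A_t\in\mathcal Q_K$ is $\mu_m$-invariant by the computation $A[\zeta x]=A[x]$ given after the definition of $\mathcal Q_K$. Hence $\partial\calE_{A_t}=\{A_t[x]=1\}$ is $\mu_m$-stable, and so is $\partial\calE_{A_t}\cap\Lambda$. This set excludes $0$, and the action is free away from the origin, so every orbit in it has exactly $m$ points. The number of orbits is therefore exactly $|\partial\calE_{A_t}\cap\Lambda|/m$.

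Next I will show that the functionals attached to points of one orbit coincide on $\mathcal Q_K$. For $B\in\mathcal Q_K$ and $\zeta\in\mu_m$, the same invariance computation, applied to $B$ instead of $A$ (it used only $K_\infty$-linearity, self-adjointness and orthogonality of the action), gives $B[\zeta x]=B[x]$. Thus $\ell_{\zeta x}=\ell_x$ as functionals on $\mathcal Q_K$. The span of all $\ell_x$ is then spanned by one functional per orbit, so $d$ is at most the number of orbits, which gives the bound. The contact set is finite by Lemma~\ref{lem:standard-geometric}(i), so all counts make sense.

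There is no real obstacle here. The only point needing care is to restrict the functionals to $\mathcal Q_K$ rather than to all of $\Sym V_\R$: on the full space the rank-one forms $(\zeta x)\otimes(\zeta x)$ are genuinely distinct, and only after restricting to (equivalently, projecting by $\Pi_K$ onto) $\mathcal Q_K$ do they collapse to a single constraint per orbit.
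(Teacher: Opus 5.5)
Your proposal is correct and follows essentially the same argument as the paper: $F_{A_t}$ is the common kernel in $\mathcal Q_K$ of the contact functionals, functionals coming from one $\mu_m$-orbit coincide by invariance, and freeness of the action makes each orbit have exactly $m$ points. Your explicit check that the contact set is $\mu_m$-stable is a useful addition; the paper uses this implicitly here and only states it later, in the proof of Lemma~\ref{lem:contact-vs-K}.
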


\begin{proof}
Each contact point $x$ imposes the linear condition $B[x]=0$ on
$B\in\mathcal Q_K$.  If $h\in \mu_m$, then for all $B\in\mathcal Q_K$,
\[
        B[hx]=B[x],
\]
because $B$ is $\mu_m$-invariant and $h$ is orthogonal.  Thus all points in the same $\mu_m$-orbit impose the same linear condition.

The action is free away from the origin, as observed in
Section~\ref{sec:cyclotomic}, so every contact orbit has exactly $m$ points.
The span of the contact conditions therefore has dimension at most
$|\partial\calE_{A_t}\cap\Lambda|/m$.  Since $F_{A_t}$ is their common kernel
in the $(Nr/2)$-dimensional space $\mathcal Q_K$, the claimed bound follows.
\end{proof}

In the full-space process of~\cite{Klartag}, each independent contact equation removes one
Brownian direction.  Here all points in a single $\mu_m$-orbit impose the same
equation on invariant quadratic forms, so the determinant drift is controlled
by the number of contact orbits rather than by the number of contact vectors.

\subsection{Operator-norm control and determinant drift}

We first need the invariant-subspace analogue of the operator-norm estimate
in~\cite[Lemma~3.1 and Corollary~3.2]{Klartag}.  The same argument applies after
restricting the Brownian motion to $\mathcal Q_K$; we record the
short direct proof.

\begin{lemma}[Operator-norm control]\label{lem:opnorm}
There is a universal constant $C_{\ref{lem:opnorm}}>0$ such that, for every
$t>0$,
\[
        \Prob\left(\|A_t-a_0\Id\|_{\op}>
        C_{\ref{lem:opnorm}}\sqrt{Nt}\right)
        \le e^{-2N}.
\]
More quantitatively, for all $u>0$,
\begin{equation}
        \Prob\left(\|A_t-a_0\Id\|_{\op}>u\right)
        \le 2\cdot 9^N\exp\left(-u^2/(8t)\right).
\label{eq:op-full-tail}
\end{equation}
Moreover, for every $0<q<\infty$ and $0<T<\infty$,
\[
        \sup_{0\le t\le T}\E\|A_t-a_0\Id\|_{\op}^q<\infty.
\]
\end{lemma}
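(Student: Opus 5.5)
The plan is to write $M_t=A_t-a_0\Id=\int_0^t\Pi_{A_s}(dW_s)$, which is a continuous martingale with values in $\mathcal Q_K\subset\Sym V_\R$, and to control $\|M_t\|_{\op}$ by a net argument combined with exponential martingale bounds. The operator norm of a symmetric matrix is $\sup_{\|\theta\|=1}|M_t[\theta]|$. Fixing a $\tfrac14$-net $\mathcal N$ of the unit sphere $S^{N-1}$ with $|\mathcal N|\le 9^N$, the standard argument gives $\|M\|_{\op}\le 2\max_{\theta\in\mathcal N}|M[\theta]|$. Indeed, $|M[\theta]-M[\theta']|\le\|M\|_{\op}\|\theta-\theta'\|(\|\theta\|+\|\theta'\|)\le\tfrac12\|M\|_{\op}$.

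For a fixed unit vector $\theta$, the scalar process $X_t=M_t[\theta]=\int_0^t\langle\Pi_{A_s}(dW_s),\theta\otimes\theta\rangle_{\HS}$ is a continuous local martingale. Since $\Pi_{A_s}$ is an orthogonal projection, self-adjoint in the HS inner product, its quadratic variation satisfies
\[
        d\langle X\rangle_t=\|\Pi_{A_t}(\theta\otimes\theta)\|_{\HS}^2\,dt\le\|\theta\otimes\theta\|_{\HS}^2\,dt=dt.
\]
(One could even use $\Pi_A\Pi_K$ and Lemma~\ref{lem:projection} to get $\tfrac12$, but $1$ suffices.) After the freezing time the integrand is zero, so this bound still holds. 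A continuous martingale with $\langle X\rangle_t\le t$ satisfies $\Prob(|X_t|>v)\le 2e^{-v^2/(2t)}$, via the exponential supermartingale $\exp(\lambda X_t-\lambda^2\langle X\rangle_t/2)$ or Dambis--Dubins--Schwarz. Taking $v=u/2$ and a union bound over $\mathcal N$ gives
\[
        \Prob(\|M_t\|_{\op}>u)\le\sum_{\theta\in\mathcal N}\Prob(|X_t^\theta|>u/2)\le 2\cdot 9^N e^{-u^2/(8t)},
\]
which is \eqref{eq:op-full-tail}.

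For the first claim, I set $u=C\sqrt{Nt}$. The right-hand side becomes $2\exp(N\log 9-C^2N/8)$, which is at most $e^{-2N}$ once $C^2/8\ge\log9+2+\log 2$. For the moments, I integrate the tail: $\E\|M_t\|_{\op}^q=\int_0^\infty qu^{q-1}\Prob(\|M_t\|_{\op}>u)\,du\le\int_0^\infty qu^{q-1}\min(1,2\cdot9^Ne^{-u^2/(8T)})\,du$ for all $t\le T$. This is finite and uniform in $t\in[0,T]$.

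The only delicate point is the martingale bookkeeping: the process is defined piecewise in phases and frozen after a stopping time. I must make sure that $X$ is a genuine continuous local martingale with the stated quadratic variation bound, so that the exponential inequality applies without integrability assumptions. I would handle this using the Itô representation $dA_t=\Pi_{A_t}(dW_t)$ from Proposition~\ref{prop:process}(iii), with a bounded predictable integrand. The integrand $\Pi_{A_t}(\theta\otimes\theta)$ has HS norm at most $1$, so the stochastic integral is in fact a true $L^2$ martingale and the exponential supermartingale argument goes through directly.
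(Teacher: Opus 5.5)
Your proposal is correct and follows essentially the same route as the paper: for each unit vector, the scalar martingale $\theta\cdot(A_t-a_0\Id)\theta$ has quadratic variation at most $t$, which gives the exponential tail bound; a $\tfrac14$-net of size at most $9^N$ with the factor-$2$ comparison and a union bound then give \eqref{eq:op-full-tail}, and integrating that tail gives the moments. Your verification of the net inequality and your remark that the bounded integrand makes the stochastic integral a true martingale simply spell out details the paper leaves implicit.
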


\begin{proof}
For a unit vector $u\in V_\R$, consider the scalar process
$(M_s)_{0\le s\le t}$ defined by
\[
        M_s=u\cdot(A_s-a_0\Id)u.
\]
This is a continuous martingale.  Recall that $\Pi_K$ is orthogonal
projection onto $\mathcal Q_K$ and that $\Pi_{A_\ell}$ is orthogonal
projection onto the active space
$F_{A_\ell}$.  Thus
\[
        [M]_s
        \le \int_0^s\|\Pi_{A_\ell}\Pi_K(u\otimes u)\|_{\HS}^2\,d\ell
        \le s\|u\otimes u\|_{\HS}^2=s.
\]
For completeness, we spell out the standard scalar martingale tail argument.
If $r>0$, then the exponential supermartingale inequality, applied to $M_s$
and to $-M_s$, gives
\[
        \Prob(|M_t|>r)
        \le 2\exp\left(-r^2/(2t)\right),
\]
since $M_0=0$ and $[M]_t\le t$.  Indeed, for every $\lambda>0$,
\[
        \Prob(M_t>r)
        \le \exp\left(-\lambda r+\lambda^2t/2\right),
\]
and the choice $\lambda=r/t$ gives the one-sided bound.

Now let $\mathcal N$ be a $1/4$-net of the unit sphere in $V_\R$ with
$|\mathcal N|\le 9^N$.  For every symmetric operator $B$ on $V_\R$,
\[
        \|B\|_{\op}
        =\sup_{|u|=1}|u\cdot Bu|
        \le 2\max_{u\in\mathcal N}|u\cdot Bu|.
\]
Applying this with $B=A_t-a_0\Id$ and taking a union bound over $\mathcal N$,
we obtain
\[
\begin{aligned}
        \Prob\left(\|A_t-a_0\Id\|_{\op}>u\right)
        &\le
        \sum_{v\in\mathcal N}
        \Prob\left(\left|v\cdot(A_t-a_0\Id)v\right|>\frac{u}{2}\right) \\
        &\le 2\cdot 9^N
        \exp\left(-u^2/(8t)\right),
\end{aligned}
\]
which is~\eqref{eq:op-full-tail}.
Taking the threshold to be $C_{\ref{lem:opnorm}}\sqrt{Nt}$ with
$C_{\ref{lem:opnorm}}$ sufficiently large proves the stated
exponential-in-$N$ bound.  Replacing $t$ by $T$
in~\eqref{eq:op-full-tail} and integrating the resulting tail bound gives the
uniform polynomial moments; the case $t=0$ is trivial.
\end{proof}

For the remaining applications, we choose the initial ellipsoid to be the ball
of radius $1-1/N$.  Thus set
\begin{equation}
        a_0=(1-1/N)^{-2},
\label{eq:a0}
\end{equation}
so that $\calE_{a_0\Id}$ is precisely this ball.

We next prove the determinant estimate needed later.  The It\^o formula for
$\log\det A_t$ is the same calculation as in~\cite[Lemma~3.3]{Klartag}; after
restricting the Brownian motion to $\mathcal Q_K$, the leading drift
is governed by $\dim_\R\mathcal Q_K=Nr/2$.  The orbit-count lemma then
replaces the contact count there by
$|\partial\calE_{A_t}\cap\Lambda|/m$.  The operator-norm error is still
controlled at the ambient vector-dimension scale $N$.

\begin{proposition}[Determinant drift]\label{prop:det-drift}
For all sufficiently large integers $m$ and every
\[
        0<T\le\frac{8\log(mNr)}{Nr},
\]
the process of Proposition~\ref{prop:process} satisfies
\[
        \E\log\det A_T
        \le -\frac{NrT}{4}
        +\frac12\int_0^T\E
        \frac{|\partial\calE_{A_t}\cap\Lambda|}{m}\,dt+O(1).
\]
\end{proposition}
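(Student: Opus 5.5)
Apply It\^o's formula to $\log\det A_t$, exactly as in \cite[Lemma~3.3]{Klartag}, but with the Brownian motion restricted to $\mathcal Q_K$. Until the freezing time we have $dA_t=\Pi_{A_t}(dW_t)$. The first-order term is $\Tr(A_t^{-1}dA_t)$, which is a local martingale. The second-order term is $-\tfrac12\sum_k\Tr(A_t^{-1}E_kA_t^{-1}E_k)\,dt$, where $(E_k)$ is an orthonormal basis of $F_{A_t}$ in the Hilbert--Schmidt inner product. This gives
\[
        \log\det A_T=\log\det A_0+\int_0^T\Tr(A_t^{-1}dA_t)
        -\frac12\int_0^T\sum_k\|A_t^{-1/2}E_kA_t^{-1/2}\|_{\HS}^2\,dt .
\]
The initial term satisfies $\log\det A_0=N\log a_0=O(1)$ by \eqref{eq:a0}. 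To see that the stochastic integral has mean zero, I will use the moment bounds of Lemma~\ref{lem:opnorm} together with the lower bound $\det A\ge4^{-N}$ from Lemma~\ref{lem:standard-geometric}. The latter bounds $\|A_t^{-1}\|_{\op}\le\|A_t\|_{\op}^{N-1}4^{N}$, so the integrand has finite moments; alternatively, a localization argument will do.

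For the drift, I use the bound $\|A^{-1/2}EA^{-1/2}\|_{\HS}\ge\|E\|_{\HS}/\|A\|_{\op}$. The second-order term then has magnitude at least $\dim F_{A_t}/\|A_t\|_{\op}^2$. By Lemma~\ref{lem:Fdimension},
\[
        \dim F_{A_t}\ge\frac{Nr}{2}-\frac{|\partial\calE_{A_t}\cap\Lambda|}{m},
\]
which produces the main term $-\tfrac{Nr}{4}$ per unit time together with $+\tfrac12\cdot\tfrac{|\partial\calE\cap\Lambda|}{m}$, but only after the factor $\|A_t\|_{\op}^{-2}$ is replaced by $1$. After freezing there is no contribution. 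At that point, however, $\dim F=0$, and Lemma~\ref{lem:Fdimension} forces $|\partial\calE\cap\Lambda|/m\ge Nr/2$, so the claimed bound still holds pointwise in time.

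The main obstacle is controlling $\|A_t\|_{\op}^{-2}\ne1$. I would write
\[
        \frac{\dim F}{\|A\|_{\op}^{2}}\ge\dim F\,(1-2\delta_t),
        \qquad \delta_t=\|A_t\|_{\op}-1 ,
\]
with $\delta_t\le (a_0-1)+\|A_t-a_0\Id\|_{\op}$. On the good event of Lemma~\ref{lem:opnorm} we have $\delta_t\le 2/N+C\sqrt{Nt}$. For $t\le T=O(\log(mNr)/(Nr))$ this gives $\sqrt{Nt}=O(\sqrt{\log(mNr)/r})$, and since $r=\lceil(\log\varphi(m))^4\rceil$ and $\log(mNr)=O(\log\varphi(m))$, this is $O((\log\varphi(m))^{-3/2})=o(1)$. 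Hence the error in the drift is at most
\[
        \frac{Nr}{2}\int_0^T 2\,\E\delta_t\,dt\lesssim Nr\,T\Bigl(\frac1N+\sqrt{NT}\Bigr),
\]
which is $O(\log(mNr)\,(\log\varphi(m))^{-3/2})+O(\log(mNr)/N)$, and both terms are $O(1)$. This is exactly why $r$ grows like $(\log\varphi(m))^4$. The error also only involves $\dim F\le Nr/2$ times the deviation, so I should not lose the orbit term. On the bad event, of probability at most $e^{-2N}$, I bound the drift contribution by $0$ from above, since that term is nonpositive. Thus the only leak is the $\E\,\delta_t$ average, and I expect the needed care to lie there: using \eqref{eq:op-full-tail} to integrate the tail, I get $\E\|A_t-a_0\Id\|_{\op}\lesssim\sqrt{Nt}$, which suffices. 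Collecting the terms and using $\tfrac12\cdot\tfrac{Nr}{2}=\tfrac{Nr}{4}$ gives the stated inequality with an $O(1)$ error.
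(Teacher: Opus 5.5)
Your proposal is correct and takes essentially the paper's route: the It\^o formula for $\log\det A_t$ gives the drift bound $-\tfrac12\E[\|A_t\|_{\op}^{-2}\dim F_{A_t}]$; you replace $\|A_t\|_{\op}^{-2}$ by $1$ at a cost $O(rT+N^{3/2}rT^{3/2})=o(1)$, which uses the growth of $r$; and only then do you apply Lemma~\ref{lem:Fdimension}, in the right order. Your pointwise tangent-line bound $(1+\delta)^{-2}\ge1-2\delta$, combined with $\E\|A_t-a_0\Id\|_{\op}=O(\sqrt{Nt})$ from \eqref{eq:op-full-tail}, is a harmless variant of the paper's good-event split; if you instead drop the drift on the bad event, you must also count the extra loss of at most $\tfrac{NrT}{4}e^{-2N}=o(1)$.
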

\begin{proof}
View $\Pi_{A_t}$ as the orthogonal projection from $\mathcal Q_K$ onto
$F_{A_t}$.  The same proof as in~\cite{Klartag} applies without change in the
Euclidean space $\mathcal Q_K$.  Indeed, Proposition~\ref{prop:process}
gives $dA_t=\Pi_{A_t}(dW_t)$, Lemma~\ref{lem:standard-geometric} gives the
required determinant lower bound, and Lemma~\ref{lem:opnorm} gives the
required operator-norm moments.  Moreover, $A_t^{-1}\in\mathcal Q_K$ and
$\Tr_{\mathcal Q_K}(\Pi_{A_t})=\dim F_{A_t}$.  In particular, the stochastic
integral has mean zero, $\log\det A_T$ is integrable, and
\begin{equation}
        \E\log\det A_T
        \le N\log a_0
        -\frac12\int_0^T
          \E[\|A_t\|_{\op}^{-2}\dim F_{A_t}]\,dt.
\label{eq:det1}
\end{equation}
Here $N\log a_0$ is bounded by a universal constant, since $a_0=(1-1/N)^{-2}$.

We now follow the good-event estimate in the proof of
\cite[Proposition~3.4]{Klartag}, keeping separate the ambient dimension $N$
and the dimension $Nr/2$ of $\mathcal Q_K$.  Let
\[
        \mathcal G_t=\{\|A_t-a_0\Id\|_{\op}\le
        C_{\ref{lem:opnorm}}\sqrt{Nt}\}.
\]
Recall that $r=\lceil(\log\varphi(m))^4\rceil$.
Since $m\le2\varphi(m)^2$ for every positive integer $m$ and
$N=\varphi(m)r$, we have $\log(mNr)=O(\log\varphi(m))$.  Thus, uniformly
for $0\le t\le T$,
\[
        Nt\le NT\le\frac{8\log(mNr)}r=o(1).
\]
On $\mathcal G_t$, we have
$\|A_t\|_{\op}\le a_0+C_{\ref{lem:opnorm}}\sqrt{Nt}$.
Using $\Prob(\mathcal G_t^c)\le e^{-2N}$ from Lemma~\ref{lem:opnorm},
$a_0=1+O(N^{-1})$, and $0\le\dim F_{A_t}\le Nr/2$, the same good-event argument
gives
\[
        \E[\|A_t\|_{\op}^{-2}\dim F_{A_t}]
        \ge \E\dim F_{A_t}-O\left(Nr(N^{-1}+\sqrt{Nt}+e^{-2N})\right).
\]
Substituting into~\eqref{eq:det1} and integrating gives
\[
        \E\log\det A_T
        \le -\frac12\int_0^T\E\dim F_{A_t}\,dt+O(1)
        +O\left(rT+N^{3/2}rT^{3/2}+NrTe^{-2N}\right).
\]
All three error terms are $o(1)$.  Indeed,
\[
        rT\le\frac{8\log(mNr)}{N}=o(1).
\]
Also,
\[
        N^{3/2}rT^{3/2}
        =O\left(\frac{(\log(mNr))^{3/2}}{\sqrt r}\right)=o(1)
\]
since $\log(mNr)=O(\log\varphi(m))$ and
$r=\lceil(\log\varphi(m))^4\rceil$, while
\[
        NrTe^{-2N}\le8\log(mNr)e^{-2N}=o(1).
\]
Absorbing these errors into the bounded term,
we obtain
\[
        \E\log\det A_T
        \le -\frac12\int_0^T\E\dim F_{A_t}\,dt+O(1).
\]
Finally, Lemma~\ref{lem:Fdimension} gives
\[
        \dim F_{A_t}\ge
        \frac{Nr}{2}-\frac{|\partial\calE_{A_t}\cap\Lambda|}{m}
\]
for every $t$.  Hence
\[
        -\frac12\int_0^T\E\dim F_{A_t}\,dt
        \le -\frac{NrT}{4}+\frac12\int_0^T\E
        \frac{|\partial\calE_{A_t}\cap\Lambda|}{m}\,dt.
\]
Substituting this bound into the preceding determinant estimate proves the proposition.
\end{proof}

\subsection{Fixed-vector contact estimates and orbit-contact localization}

The next estimate is the proof of~\cite[Proposition~4.1]{Klartag} with the
rank-one variance replaced by its invariant projection norm.
Recall from Lemma~\ref{lem:projection} that, for $x\ne0$,
\[
        \beta(x)^2
        =\frac{\|\Pi_K(x\otimes x)\|_{\HS}^2}{\|x\|^4}.
\]

\begin{lemma}[Fixed-vector contact estimate]\label{lem:fixed-contact}
Assume $a_0\Id$ is $\Lambda$-free.  For $0\ne x\in\Lambda$ and $t>0$,
\[
        \Prob(x\in\partial\calE_{A_t})
        \le
        2\Phi\left(
        \frac{a_0-\|x\|^{-2}}{\beta(x)\sqrt t}
        \right),
\]
where $\Phi$ is the standard Gaussian upper-tail function.
\end{lemma}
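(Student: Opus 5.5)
The plan is to follow Klartag's proof of Proposition~4.1 and study the scalar process $M_t=A_t[x]$ for the fixed nonzero lattice vector $x$. Proposition~\ref{prop:process}(iii) gives $dA_t=\Pi_{A_t}(dW_t)$, so $M_t$ is a continuous martingale started at $M_0=a_0\|x\|^2$:
\[
        dM_t=\langle \Pi_{A_t}(dW_t),x\otimes x\rangle_{\HS}
        =\langle dW_t,\Pi_{A_t}\Pi_K(x\otimes x)\rangle_{\HS}.
\]
Here we use that $\Pi_{A_t}$ is self-adjoint on $\mathcal Q_K$ and that $W_t$ lives in $\mathcal Q_K$. Its quadratic variation is therefore
\[
        d[M]_t=\|\Pi_{A_t}\Pi_K(x\otimes x)\|_{\HS}^2\,dt\le\|\Pi_K(x\otimes x)\|_{\HS}^2\,dt=\beta(x)^2\|x\|^4\,dt.
\]
This quadratic-variation bound is where the invariant projection norm of Lemma~\ref{lem:projection} replaces the full rank-one norm $\|x\|^4$.

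Next, we relate contact to the martingale. By Proposition~\ref{prop:process}(i) the matrix $A_s$ is $\Lambda$-free for all $s$, so $M_s\ge1$ for every $s$. By (ii), once $x$ becomes a contact point we have $M_s=1$ for all later $s$. Therefore
\[
        \{x\in\partial\calE_{A_t}\}=\{M_t=1\}\subseteq\{\inf_{s\le t}M_s\le1\}.
\]
That is, the martingale must have descended by
\[
        M_0-1=a_0\|x\|^2-1=\|x\|^2\bigl(a_0-\|x\|^{-2}\bigr).
\]
This descent is nonnegative because $a_0\Id$ is $\Lambda$-free, so $\|x\|^2a_0\ge1$.

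To bound the probability of this descent, we apply Dambis--Dubins--Schwarz: $M_s=M_0+\widetilde B_{[M]_s}$ for a Brownian motion $\widetilde B$, possibly after enlarging the probability space. Since $[M]_t\le\beta(x)^2\|x\|^4t=:\tau$, the event above implies $\inf_{u\le\tau}\widetilde B_u\le-(M_0-1)$. The reflection principle then gives
\[
        \Prob(x\in\partial\calE_{A_t})\le 2\Phi\!\left(\frac{M_0-1}{\sqrt\tau}\right)
        =2\Phi\!\left(\frac{\|x\|^2(a_0-\|x\|^{-2})}{\beta(x)\|x\|^2\sqrt t}\right),
\]
which is exactly the claimed bound after cancelling $\|x\|^2$.

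The only delicate point is the behaviour after the process freezes, when $F_{A_t}=\{0\}$ and $A_t$ is held constant. In that regime the formula $dA=\Pi_A dW$ still holds with $\Pi_A=0$, so $M$ simply stops and the bound on $[M]$ is unaffected. We also need $M$ to be a true martingale rather than merely a local one. This follows because $[M]$ is bounded, or alternatively from the moment bounds of Lemma~\ref{lem:opnorm}.
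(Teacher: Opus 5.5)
Your proposal is correct and matches the paper's proof. Both use the martingale $A_s[x]$ (the paper shifts it to $A_s[x]-1$) and bound its quadratic variation by $\|\Pi_K(x\otimes x)\|_{\HS}^2=\beta(x)^2\|x\|^4$ per unit time, using $F_{A_s}\subseteq\mathcal Q_K$, before concluding with Dambis--Dubins--Schwarz and the reflection principle.
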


\begin{proof}
Set $M_s=A_s[x]-1$.  As in~\cite{Klartag}, this is a continuous martingale with
\[
        M_0=a_0\|x\|^2-1
        \ge0.
\]
Since $F_{A_s}\subseteq\mathcal Q_K$, its quadratic variation satisfies
\[
\begin{aligned}
        [M]_u
        &=\int_0^u\|\Pi_{A_s}(x\otimes x)\|_{\HS}^2\,ds \\
        &\le u\|\Pi_K(x\otimes x)\|_{\HS}^2
        =u\beta(x)^2\|x\|^4.
\end{aligned}
\]
The Dambis--Dubins--Schwarz and reflection-principle argument
in~\cite{Klartag} therefore gives
\[
        \Prob(x\in\partial\calE_{A_t})
        \le2\Phi\left(\frac{M_0}{\beta(x)\|x\|^2\sqrt t}\right)
        =2\Phi\left(
        \frac{a_0-\|x\|^{-2}}{\beta(x)\sqrt t}
        \right).\qedhere
\]
\end{proof}

Following the contact-localization step in~\cite[Proposition~4.2]{Klartag}, we now restrict
attention to the radial shell in which a new contact could occur on the
operator-norm good event.  For times with
$a_0-C_{\ref{lem:opnorm}}\sqrt{Nt}>0$, define
\[
        R_t=\left\{x\in V_\R:
        \frac1{a_0}\le\|x\|^2\le
        \frac1{a_0-C_{\ref{lem:opnorm}}\sqrt{Nt}}
        \right\}
\]
and define the orbit-contact functional
\begin{equation}
        K_t(\Lambda)=
        \frac1m\sum_{0\ne x\in\Lambda}\mathbf 1_{R_t}(x)
        \Phi\left(
        \frac{a_0-\|x\|^{-2}}{\beta(x)\sqrt t}
        \right).
\label{eq:Kt}
\end{equation}

The summand is $\mu_m$-invariant, so $K_t(\Lambda)$ is equivalently the
corresponding sum over nonzero $\mu_m$-orbits.

The next lemma is the contact-localization estimate
from~\cite{Klartag} with
the all-vector contact count replaced by an orbit count.

\begin{lemma}[Expected orbit contacts]\label{lem:contact-vs-K}
There is a universal constant $c_{\ref{lem:contact-vs-K}}>0$ such that, if
$a_0\Id$ is $\Lambda$-free, then for all $t>0$ in the range where
$a_0-C_{\ref{lem:opnorm}}\sqrt{Nt}>0$,
\[
        \E
        \frac{|\partial\calE_{A_t}\cap\Lambda|}{m}
        \le 2K_t(\Lambda)+e^{-c_{\ref{lem:contact-vs-K}}N}.
\]
\end{lemma}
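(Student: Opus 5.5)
The plan is to write the expected contact count as a sum over lattice points and split each summand according to the good event $\mathcal G_t$ of Lemma~\ref{lem:opnorm}. Here $\Lambda$ is fixed and the expectation is over the Brownian motion only. By definition,
\[
        \E|\partial\calE_{A_t}\cap\Lambda|
        =\sum_{0\ne x\in\Lambda}\Prob(x\in\partial\calE_{A_t}).
\]
We split each term as
\[
        \Prob(x\in\partial\calE_{A_t})
        \le\Prob(x\in\partial\calE_{A_t},\ \mathcal G_t)
        +\Prob(x\in\partial\calE_{A_t},\ \mathcal G_t^c).
\]

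The first step is localization on $\mathcal G_t$. If $x\in\partial\calE_{A_t}$ then $A_t[x]=1$. On $\mathcal G_t$ we have $A_t\succeq(a_0-C_{\ref{lem:opnorm}}\sqrt{Nt})\Id$, which gives $\|x\|^2\le 1/(a_0-C_{\ref{lem:opnorm}}\sqrt{Nt})$. Since $a_0\Id$ is $\Lambda$-free, every nonzero lattice point satisfies $a_0\|x\|^2\ge1$. Hence the contact points on $\mathcal G_t$ all lie in $R_t$. We then apply Lemma~\ref{lem:fixed-contact} to each $x\in R_t$. This bounds the good-event part of the sum by
\[
        \sum_{0\ne x\in\Lambda}\mathbf 1_{R_t}(x)\,2\Phi\left(\frac{a_0-\|x\|^{-2}}{\beta(x)\sqrt t}\right)=2mK_t(\Lambda).
\]
Dividing by $m$ gives the main term $2K_t(\Lambda)$.

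The second step handles the bad event. We do not sum over points here. Instead we bound the whole contribution $\E\bigl[|\partial\calE_{A_t}\cap\Lambda|\,\mathbf 1_{\mathcal G_t^c}\bigr]$ by Lemma~\ref{lem:standard-geometric}(i), which gives $|\partial\calE_{A_t}\cap\Lambda|\le 2\cdot 2^N$ deterministically, since $A_t$ is $\Lambda$-free by Proposition~\ref{prop:process}(i). Combining this with $\Prob(\mathcal G_t^c)\le e^{-2N}$ yields at most $2^{N+1}e^{-2N}\le e^{-cN}$, because $2<e^2$. Dividing by $m\ge1$ only helps.

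The main point needing care is the bad-event term: it is essential to use the deterministic contact bound rather than summing $\Prob(x\in\partial\calE_{A_t},\mathcal G_t^c)$ over infinitely many $x$. The numerical margin between $2^N$ and $e^{2N}$ is comfortable, so with this choice the estimate goes through. We should also check that the localization uses $\lambda_{\min}(A_t)\ge a_0-\|A_t-a_0\Id\|_{\op}$, which is where the hypothesis $a_0-C_{\ref{lem:opnorm}}\sqrt{Nt}>0$ enters.
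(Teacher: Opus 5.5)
Your proposal is correct and follows the same route as the paper. On the good event $\mathcal G_t$, you localize contacts to the shell $R_t$ via $\lambda_{\min}(A_t)\ge a_0-\|A_t-a_0\Id\|_{\op}$ and apply the fixed-vector contact estimate; on its complement, you use the deterministic bound $2(2^N-1)$ on boundary lattice points together with $\Prob(\mathcal G_t^c)\le e^{-2N}$.
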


\begin{proof}
On the good event
\[
        \|A_t-a_0\Id\|_{\op}\le C_{\ref{lem:opnorm}}\sqrt{Nt},
\]
any contact point $x$ at time $t$ satisfies $A_t[x]=1$, hence
\[
        (a_0-C_{\ref{lem:opnorm}}\sqrt{Nt})\|x\|^2\le1.
\]
Because $a_0\Id$ is $\Lambda$-free, every nonzero lattice point also satisfies $a_0\|x\|^2\ge1$.  Thus any contact point at time $t$ lies in the shell $R_t$.

Lemma~\ref{lem:fixed-contact} bounds the probability that a lattice vector in
$R_t$ is a contact point by twice the corresponding Gaussian tail.  If one
point in an orbit is a contact point, the whole orbit is, because both $A_t$
and $\Lambda$ are $\mu_m$-invariant.  Summing the vector bounds and dividing by
$m$ therefore gives the contribution $2K_t(\Lambda)$ on the good event.

On the bad event, the defining choice of $C_{\ref{lem:opnorm}}$ gives
probability at most $e^{-2N}$.  By Lemma~\ref{lem:standard-geometric}, the
number of boundary lattice points is at most $2(2^N-1)$, hence the number of
orbit contacts is at most this number.  The bad-event contribution is therefore
at most $2(2^N-1)e^{-2N}\le e^{-c_{\ref{lem:contact-vs-K}}N}$ after choosing
$c_{\ref{lem:contact-vs-K}}>0$ sufficiently small.
\end{proof}

\section{Averaging the orbit-contact functional}

We now average the orbit-contact functional $K_t$ defined in
\eqref{eq:Kt}.  Lemma~\ref{lem:contact-vs-K} reduces the expected number of
Brownian contact orbits to this deterministic lattice sum, so it remains to
average $K_t$ over the arithmetic lattice ensemble.

\subsection{Radial shell estimate}

We first prove the radial estimate needed for the averaged orbit-contact
bound.  Recall that $\beta(\omega)$ is the direction-dependent
projected-variance factor from Lemma~\ref{lem:projection}.  The calculation
below is the shell-integral calculation from~\cite[Lemma~4.3]{Klartag}, with
one modification: for a fixed direction, $\sqrt t$ is replaced by
$\beta(\omega)\sqrt t$ in the Gaussian tail.  The shell cutoff still comes
from the ambient operator-norm estimate and is unchanged.  We record the
short calculation to make this distinction explicit.

Recall the fixed choice $a_0=(1-1/N)^{-2}$ from
\eqref{eq:a0}.  Let $d\sigma$ denote probability measure on
$S^{N-1}$.  Our polar-coordinate normalization is
\[
        \frac1{\vol(B_N)}\int_{V_\R} f(x)\,dx
        =N\int_{S^{N-1}}\int_0^\infty f(\rho\omega)\rho^{N-1}\,d\rho\,d\sigma(\omega)
\]
for nonnegative Borel $f$.  Fix a direction $\omega\in S^{N-1}\subset V_\R$.
Recall that
\[
        \beta(\omega)^2
        =\frac{\|\Pi_K(\omega\otimes\omega)\|_{\HS}^2}{\|\omega\|^4}
        =\frac12\sum_{j=1}^s\|\omega_j\|^4,
\]
where $\omega=(\omega_1,\dots,\omega_s)\in\bigoplus_{j=1}^s\C^r$; see
Lemma~\ref{lem:projection}.  For $t>0$ with
$a_0-C_{\ref{lem:opnorm}}\sqrt{Nt}>0$, define
\begin{equation}
        J_t(\omega)
        =N\int_{1/\sqrt{a_0}}^{1/\sqrt{a_0-C_{\ref{lem:opnorm}}\sqrt{Nt}}}
        \Phi\left(
        \frac{a_0-\rho^{-2}}{\beta(\omega)\sqrt t}
        \right)
        \rho^{N-1}\,d\rho.
\label{eq:Jt}
\end{equation}
This is the radial contribution in polar coordinates normalized by
$\vol(B_N)$.

\begin{lemma}[Radial bound with angular variance]\label{lem:radial}
There are universal constants $c,C>0$ such that, whenever $t>0$ and $Nt\le c$,
\[
        J_t(\omega)
        \le \exp\left(
        \left(1+C\sqrt{Nt}\right)
        N^2\beta(\omega)^2t/8
        \right).
\]
\end{lemma}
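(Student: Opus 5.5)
We prove the bound by changing variables so that the Gaussian tail becomes a function of a linear variable, and then integrating against the exponential growth of $\rho^{N-1}$. Write $b=\beta(\omega)\sqrt t$ and substitute $y=a_0-\rho^{-2}$, so that $\rho=(a_0-y)^{-1/2}$. As $\rho$ runs over the shell, $y$ runs over $[0,\delta]$ with $\delta=C_{\ref{lem:opnorm}}\sqrt{Nt}$. Since $d\rho=\tfrac12(a_0-y)^{-3/2}\,dy$, we get
\[
        J_t(\omega)=\frac N2\int_0^{\delta}\Phi(y/b)\,(a_0-y)^{-(N+2)/2}\,dy .
\]
Because $a_0=(1-1/N)^{-2}$ and $y\le\delta$, with $\delta$ small when $Nt\le c$, we have $a_0-y\ge 1-y-O(1/N)$, hence $a_0-y\ge(1-1/N)^{-2}(1-y(1+O(1/N)))$. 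Therefore
\[
        (a_0-y)^{-(N+2)/2}\le (1-1/N)^{N+2}\exp\bigl(\tfrac{N+2}{2}\,y\,(1+O(y))\bigr)\le C'\exp\bigl(\tfrac N2 y(1+C\delta)\bigr),
\]
where the error terms $O(1/N)$ and $O(y)\le O(\delta)$ are absorbed into the factor $(1+C\delta)$ in the exponent. The main computation is then to bound
\[
        \frac N2\int_0^\infty \Phi(y/b)\,e^{\lambda y}\,dy,\qquad \lambda=\tfrac N2(1+C\delta).
\]

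For this integral we use the standard identity obtained by integrating by parts. Since $\Phi'=-\phi$,
\[
        \int_0^\infty\Phi(y/b)e^{\lambda y}\,dy=\frac1\lambda\Bigl(-\tfrac12+\int_0^\infty e^{\lambda y}\phi(y/b)\,\frac{dy}{b}\Bigr)\le\frac1\lambda\, e^{\lambda^2b^2/2}.
\]
The inequality uses the Gaussian moment generating function, $\int_0^\infty e^{\lambda bz}\phi(z)\,dz\le e^{\lambda^2b^2/2}$. Multiplying by $N/2$ gives
\[
        J_t(\omega)\le \frac{N}{2\lambda}\,C'\exp\Bigl(\tfrac{\lambda^2b^2}{2}\Bigr)\le C'\exp\Bigl(\tfrac{(1+C\delta)^2N^2\beta(\omega)^2t}{8}\Bigr),
\]
and $(1+C\delta)^2=1+O(\sqrt{Nt})$.

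The main obstacle is the multiplicative constant $C'$: the lemma claims a bound with no prefactor. Tracking it more carefully, the factor $(1-1/N)^{N+2}\approx e^{-1}$ is below $1$, which gives some room, but an upper bound of the form $\tfrac12 e^{\cdot}$ is easier to obtain. I would keep the exact term $-\tfrac12$ from the integration by parts and write $\int_0^\infty e^{\lambda bz}\phi(z)\,dz=e^{\lambda^2b^2/2}\,(1-\Phi(\lambda b))$. Combined with the factor $e^{-1}(1+O(\delta))$ and $N/(2\lambda)\le 1$, this makes the prefactor at most $1$ for $c$ small. If a residual $1+O(1/N)$ factor remains, I would absorb it by noting that the bounds are only needed up to $1+C\sqrt{Nt}$ in the exponent, or handle it through the slack in the $e^{-1}$ factor.

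A final check is needed to justify dropping the upper limit $\delta$ (we extended the integral to $\infty$) and the expansion of $\log(a_0-y)$. This uses only $\delta\le C\sqrt c$, which is small, and the positivity of the integrand.
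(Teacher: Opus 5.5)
Your proposal is correct and follows essentially the same route as the paper: a change of variables in the shell integral, domination of $(a_0-y)^{-(N+2)/2}$ by an exponential of rate $\frac N2(1+O(\sqrt{Nt}))$, extension of the integral to $[0,\infty)$, integration by parts, and the Gaussian moment generating function. The prefactor problem you flag disappears if you keep the factor $1/a_0$: write $(a_0-y)^{-(N+2)/2}=a_0^{-(N+2)/2}(1-y/a_0)^{-(N+2)/2}$ and use $a_0^{-(N+2)/2}\le1$ and $(N+2)/(Na_0)\le1$, as the paper does, so no constant ever appears. Your $e^{-1}$-slack argument also works, but your other fallback, absorbing a constant into the $(1+C\sqrt{Nt})$ exponent, would fail for small $t$.
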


\begin{proof}
Write $\beta=\beta(\omega)$.  By Lemma~\ref{lem:projection}, $\beta>0$.
The change of variables $y=(a_0-\rho^{-2})/(\beta\sqrt t)$ gives
\[
        J_t(\omega)
        =\frac{N\beta\sqrt t}{2}
        \int_0^{C_{\ref{lem:opnorm}}\sqrt N/\beta}
        \Phi(y)(a_0-\beta\sqrt t\,y)^{-(N+2)/2}\,dy.
\]
Choose $c>0$ small enough that
$C_{\ref{lem:opnorm}}\sqrt{Nt}\le1/2$ whenever $Nt\le c$.  On the interval of
integration,
\[
        0\le\frac{\beta\sqrt t\,y}{a_0}
        \le C_{\ref{lem:opnorm}}\sqrt{Nt}\le\frac12.
\]
Since $a_0\ge1$,
$(N+2)/(Na_0)=(1+2/N)(1-1/N)^2\le1$, and
$-\log(1-u)=u(1+O(u))$ uniformly for $0\le u\le1/2$, there is a universal
constant $C_1>0$ such that the choice
$b=(1+C_1\sqrt{Nt})N\beta\sqrt t/2$ gives
\[
        (a_0-\beta\sqrt t\,y)^{-(N+2)/2}\le e^{by}.
\]
After extending the positive integrand beyond its original upper endpoint,
integration by parts gives
\begin{align*}
        J_t(\omega)
        &\le b\int_0^\infty\Phi(y)e^{by}\,dy
        =\int_0^\infty \frac1{\sqrt{2\pi}}e^{-y^2/2}(e^{by}-1)\,dy
        \\
        &\le \exp\left(\frac{b^2}{2}\right)
        =\exp\left(
        \left(1+O(\sqrt{Nt})\right)\frac{N^2\beta^2t}{8}
        \right).\qedhere
\end{align*}
\end{proof}

\subsection{Angular block-mass estimate}\label{subsec:angular-block-mass}

We next average the direction-dependent bound from the preceding subsection.
This is the new analytic step: it uses concentration of the cyclotomic block
masses in place of the full rotational symmetry available in the original
argument.

\begin{lemma}[Angular block-mass moment]\label{lem:angular-moment}
There are universal constants $r_0,c,C>0$ such that, for all $r\ge r_0$ and all
$0\le\lambda\le c\sqrt r$, if
$\omega=(\omega_1,\dots,\omega_s)$ is a uniformly random unit vector in
$S^{N-1}\subset V_\R=\bigoplus_{j=1}^s\C^r$, with $\omega_j\in\C^r$, then
\[
        \E\exp\left(\lambda s\sum_{j=1}^s\|\omega_j\|^4\right)
        \le Ce^\lambda.
\]
\end{lemma}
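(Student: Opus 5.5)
We write $\omega=g/\|g\|$ with $g$ a standard Gaussian vector in $V_\R\cong\R^N$, where $N=2rs$. Put $G_j=\|g_j\|^2$. The $G_j$ are independent $\chi^2_{2r}$ variables, with mean $2r$ and variance $4r$. Let $S=\sum_jG_j$ and $p_j=\|\omega_j\|^2=G_j/S$. Since $\sum_jp_j=1$,
\[
        s\sum_{j}p_j^2-1=s\sum_j\Bigl(p_j-\frac1s\Bigr)^2
        \le s\sum_j\Bigl(p_j-\frac{2r}{S}\Bigr)^2
        =\frac{s}{S^2}\sum_j(G_j-2r)^2 .
\]
The inequality holds because the mean $1/s$ of the numbers $p_j$ minimizes $\sum_j(p_j-a)^2$ over $a$. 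So it suffices to prove $\E\exp\bigl(\lambda(s\sum_jp_j^2-1)\bigr)\le C$.

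\textbf{Step 1 (bad event for $S$).} Let $\mathcal B=\{S<rs\}$, so that $S$ is below half its mean $2rs=N$. A $\chi^2_N$ lower tail bound gives $\Prob(\mathcal B)\le e^{-c_1rs}$. On $\mathcal B$ we only use the trivial bound $s\sum_jp_j^2\le s$. The contribution is then at most $e^{\lambda s-c_1rs}\le e^{cs\sqrt r-c_1rs}\le1$ once $r\ge r_0$. This holds uniformly in $s$.

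\textbf{Step 2 (termwise bound on the good event).} On $\mathcal B^c$ each summand has two bounds. First, $s(G_j-2r)^2/S^2\le Z_j^2/s$, where $Z_j=(G_j-2r)/r$. Second, each summand is also $\le s(p_j^2+4r^2/S^2)\le sp_j+4/s\le G_j/r+4/s=2+Z_j+4/s$, using $p_j\le1$. Hence on $\mathcal B^c$
\[
        s\sum_jp_j^2-1\le\sum_{j=1}^s Y_j,\qquad
        Y_j=\min\Bigl(\frac{Z_j^2}{s},\,2+|Z_j|+\frac4s\Bigr).
\]
The $Y_j$ are independent. Dropping the indicator of $\mathcal B^c$ only increases the expectation, so
\[
        \E\bigl[\exp(\lambda\textstyle\sum_jY_j)\bigr]=\bigl(\E e^{\lambda Y_1}\bigr)^s.
\]
The cap by $|Z_j|$ is the key point. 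It encodes $p_j\le1$, and without it $e^{\mu Z^2}$ would not be integrable against the merely exponential upper tail of $\chi^2$.

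\textbf{Step 3 (one-variable estimate; the main obstacle).} We must show $\E e^{\lambda Y_1}\le1+C\lambda/(rs)$ for $0\le\lambda\le c\sqrt r$, uniformly in $s\ge1$. Then $(\E e^{\lambda Y_1})^s\le e^{C\lambda/r}\le C'$, which finishes the proof.

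We write $e^{\lambda Y}-1\le\lambda Y e^{\lambda Y}$. Bernstein-type bounds for $\chi^2_{2r}$ give $\Prob(|Z_1|>z)\le2\exp(-c_2r\min(z^2,z))$, so we split into two ranges.
\begin{itemize}
\item[(a)] Range $|Z_1|\le1$. Here $\lambda Y\le\lambda z^2/s$ with $\lambda\le c\sqrt r$. The Gaussian tail $e^{-c_2rz^2}$ dominates $e^{\lambda z^2/s}$ because $\lambda/s\le c\sqrt r\ll r$. The contribution is $\lesssim(\lambda/s)\E Z_1^2\lesssim\lambda/(rs)$.
\item[(b)] Range $|Z_1|>1$. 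Here $\lambda Y\le\lambda(6+|Z_1|)$. The exponential tail $e^{-c_2rz}$ beats $e^{\lambda z}$ since $\lambda\le c\sqrt r\le c_2r/2$. The contribution is $\lesssim\lambda\,e^{6\lambda}\int_1^\infty(1+z)e^{-c_2rz/2}\,dz\le e^{-c_3r}$, using $\lambda\le c\sqrt r$.
\end{itemize}
The remaining point is that $e^{-c_3r}$ is not obviously $O(\lambda/(rs))$ when $s$ is huge. To handle this we use the first alternative $Y\le Z^2/s$ also for $1<|Z|\le s$, since there $Z^2/s\le|Z|$. Each such contribution then carries a factor $1/s$, and only $|Z|>s$ uses the cap; its probability $e^{-c_2rs}$ is negligible against $1/s$. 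Consequently the bound $\E e^{\lambda Y_1}-1\le C\lambda/(rs)+e^{-crs}$ holds. Its $s$-th power is bounded by a universal constant, which gives $\E\exp(\lambda s\sum_j\|\omega_j\|^4)\le Ce^\lambda$.
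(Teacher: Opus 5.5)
Your proof is correct, and it takes a genuinely different route from the paper's. The paper uses the Gaussian representation only to compute $\E\|\omega_j\|^4$ exactly, which gives $\E F^2\le 1+1/r$ for $F=(s\sum_j\|\omega_j\|^4)^{1/2}$. It then notes that $F$ is $2\sqrt s$-Lipschitz on $S^{N-1}$, so L\'evy concentration in dimension $N=2sr$ (where the factor $s$ cancels) gives a subgaussian tail at scale $r^{-1/2}$ for $Y=(F-\sqrt{1+1/r})_+$. It concludes by expanding $F^2\le(\sqrt{1+1/r}+Y)^2$ and absorbing the cross term using $\lambda\le c\sqrt r$.

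You instead work directly with the independent $\chi^2_{2r}$ block masses. The variance-minimization step $\sum_j(p_j-1/s)^2\le\sum_j(p_j-2r/S)^2$ removes the random normalization, and the event $\{S<rs\}$ is discarded at cost $e^{\lambda s-c_1rs}$. The cap coming from $p_j\le1$ makes the tensorized one-variable moment finite despite the merely exponential upper tail of $\chi^2$. Your split at $|Z_1|=s$ is exactly what secures uniformity in $s$.

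The paper's argument is shorter and gets uniformity in $s$ for free from a black-box inequality. Yours needs only one-dimensional Bernstein bounds, and it is sharper, because it sees the fluctuations of $s\sum_jp_j^2$ at their true size rather than through the crude Lipschitz constant $2\sqrt s$. Indeed, every use you make of $\lambda\le c\sqrt r$ only requires $\lambda\le cr$, and your final bound is $1+\exp(C\lambda/r+O(1/r))$. So your argument proves the lemma for $0\le\lambda\le cr$. This range is optimal up to constants for large $s$, by the Jensen lower bound $\E\exp\bigl(\lambda(s\sum_jp_j^2-1)\bigr)\ge\exp\bigl(\lambda(s-1)/(sr+1)\bigr)$ (cf.\ the remark after the lemma), although the paper's application does not need the extra range.
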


\begin{proof}
Let $g=(g_1,\dots,g_s)$ be a standard Gaussian vector in
$V_\R=\bigoplus_{j=1}^s\C^r$.  Its direction
$\omega=g/\|g\|$ is uniformly distributed on $S^{N-1}$ and is independent of
$\|g\|$.  If $h$ is a standard Gaussian vector in $\R^d$, then expansion of
$\|h\|^4$ and the identities $\E h_i^2=1$ and $\E h_i^4=3$ give
\[
        \E\|h\|^4=d(d+2).
\]
Since $g_j$ has real dimension $2r$ and $g$ has real dimension $N=2sr$, it
follows that
\[
        \E\|\omega_j\|^4
        =\frac{\E\|g_j\|^4}{\E\|g\|^4}
        =\frac{2r(2r+2)}{N(N+2)}
        =\frac{r(r+1)}{sr(sr+1)}.
\]

Put
\[
        F(\omega)=\left(s\sum_{j=1}^s\|\omega_j\|^4\right)^{1/2}.
\]
By symmetry,
\[
        \E F(\omega)^2
        =s^2\frac{r(r+1)}{sr(sr+1)}
        =\frac{s(r+1)}{sr+1}
        \le1+\frac1r.
\]
Moreover, for $x,y\in S^{N-1}$,
\[
        |F(x)-F(y)|
        \le \sqrt{s}\left(\sum_{j=1}^s
        \bigl|\|x_j\|^2-\|y_j\|^2\bigr|^2\right)^{1/2}
        \le2\sqrt{s}\|x-y\|.
\]
Let $Y=(F(\omega)-\sqrt{1+1/r})_+$.  Since $N=2sr$, concentration on the sphere and
$\E F\le(\E F^2)^{1/2}\le\sqrt{1+1/r}$ imply that there are universal
constants $c,C>0$ such that
\[
        \Prob(Y>u)\le C e^{-2cru^2}
\]
for every $u\ge0$.
Therefore,
\[
        \E e^{crY^2}
        =1+\int_0^\infty 2cru e^{cru^2}\Prob(Y>u)\,du
        \le1+C\int_0^\infty 2cru e^{-cru^2}\,du=O(1).
\]
For $x,y\ge0$, AM--GM gives
\[
        2xy\le\frac{2}{\sqrt r}x^2+\frac{\sqrt r}{2}y^2.
\]
It follows that
\[
        (x+y)^2\le(1+2/\sqrt r)x^2+(1+\sqrt r/2)y^2.
\]
For $0\le\lambda\le c\sqrt r$, applying this with
$x=\sqrt{1+1/r}$ and $y=Y$ gives, after decreasing
$c>0$ if necessary and choosing $r_0$ sufficiently
large,
\[
\begin{aligned}
        \lambda F^2
        &\le\lambda\left(\sqrt{1+1/r}+Y\right)^2 \\
        &\le\lambda(1+2/\sqrt r)(1+1/r)
        +\lambda(1+\sqrt r/2)Y^2 \\
        &\le\lambda+O(1)+crY^2
        && \text{since $\lambda\le c\sqrt r$}.
\end{aligned}
\]
Together with the preceding moment bound, this yields
\[
        \E e^{\lambda s\sum_{j=1}^s\|\omega_j\|^4}
        =\E e^{\lambda F^2}
        \le e^{\lambda+O(1)}\E e^{crY^2}
        =O(e^\lambda).
\]
This proves the lemma.
\end{proof}

\begin{remark}
\label{re:effect_of_rank}
It is useful to note why the module rank $r$ is taken to grow.  The Gaussian
calculation in the proof gives
\[
        \E\left[s\sum_{j=1}^s\|\omega_j\|^4\right]
        =\frac{s(r+1)}{sr+1}=1+\frac1r+o(1).
\]
If $r$ were fixed, then Jensen's inequality shows that the exponential moment
arising in the angular average is at least
\[
        \exp\left((1+1/r+o(1))\lambda\right).
\]
Thus, even disregarding the radial error, the best contact bound of this form
that one could hope for is
\[
        \E_{\Lambda_g}K_t(\Lambda_g)
        =O\left(\frac1m
        \exp\left(\frac{(1+1/r)Nrt}{8}\right)\right).
\]
At the time $T$ chosen below, we have
$NrT/2=4\log(mNr)+O(1)$.  This bound would therefore yield only
\[
        \frac1m\int_0^T
        \exp\left(\frac{(1+1/r)Nrt}{8}\right)dt
        \asymp(mNr)^{1/r}.
\]
This is unbounded as $m\to\infty$.
For example, when $r=2$, the limiting coefficient is $3/2$ and the integral
grows like $(mNr)^{1/2}$.  To keep the integral bounded, one would instead have
to stop at a time satisfying
\[
        \frac{NrT}{2}\le\frac{4}{1+1/r}\log(mNr)+O(1).
\]
The determinant drift would then give only a volume gain of order
\[
        \exp\left(\frac{NrT}{8}\right)=O\left((mNr)^{r/(r+1)}\right),
\]
rather than order $mNr$.  Taking $r$ to grow makes the coefficient $1+1/r$
tend to $1$ and avoids this loss.
\end{remark}

We shall use the following immediate consequence in the time range needed
later.  It incorporates the radial error.

\begin{lemma}[Angular average in the admissible range]\label{lem:angular-radial}
There is a universal constant $C>0$ such that, for all sufficiently large
integers $m$, uniformly for
\[
        0<t\le\frac{8\log(mNr)}{Nr},
\]
if $\omega$ is a uniformly random unit vector in $S^{N-1}\subset V_\R$, then
\[
        \E J_t(\omega)
        \le C\exp\left(\frac{Nrt}{8}\right).
\]
\end{lemma}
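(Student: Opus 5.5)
We combine Lemma~\ref{lem:radial} with Lemma~\ref{lem:angular-moment}. First we check that the hypothesis $Nt\le c$ of Lemma~\ref{lem:radial} holds throughout the admissible range. As in the proof of Proposition~\ref{prop:det-drift}, we have $\log(mNr)=O(\log\varphi(m))$ and $r=\lceil(\log\varphi(m))^4\rceil$, so
\[
        Nt\le\frac{8\log(mNr)}{r}=O\bigl((\log\varphi(m))^{-3}\bigr)=o(1).
\]
This also guarantees that $a_0-C_{\ref{lem:opnorm}}\sqrt{Nt}>0$ for large $m$, so $J_t$ is defined. Lemma~\ref{lem:radial} then gives, pointwise in $\omega$,
\[
        J_t(\omega)\le\exp\left((1+C\sqrt{Nt})\frac{N^2t}{16}\sum_{j=1}^s\|\omega_j\|^4\right),
\]
using $\beta(\omega)^2=\frac12\sum_j\|\omega_j\|^4$ from Lemma~\ref{lem:projection}.

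Next we rewrite the exponent in the form $\lambda s\sum_j\|\omega_j\|^4$. Since $N=2sr$, we have $N^2t/16=s\cdot Nrt/8$. This gives
\[
        \lambda=(1+C\sqrt{Nt})\frac{Nrt}{8}.
\]
Lemma~\ref{lem:angular-moment} then yields $\E J_t(\omega)\le C'e^{\lambda}$, provided $\lambda\le c\sqrt r$ and $r\ge r_0$. The latter holds for large $m$.

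For the constraint $\lambda\le c\sqrt r$: in the admissible range, $Nrt/8\le\log(mNr)=O(\log\varphi(m))$. So $\lambda=O(\log\varphi(m))$, while $\sqrt r\ge(\log\varphi(m))^2$. Hence $\lambda\le c\sqrt r$ for all sufficiently large $m$.

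Finally we absorb the radial correction into the constant:
\[
        e^\lambda=\exp\left(\frac{Nrt}{8}\right)\exp\left(C\sqrt{Nt}\,\frac{Nrt}{8}\right).
\]
The second factor is $\exp\bigl(O((\log\varphi(m))^{-3/2}\cdot\log\varphi(m))\bigr)=\exp\bigl(O((\log\varphi(m))^{-1/2})\bigr)=O(1)$, uniformly in $t$.

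The main point needing care is this last absorption. The multiplicative error $1+C\sqrt{Nt}$ multiplies an exponent of size up to $\log(mNr)$, so we need $\sqrt{Nt}\log(mNr)=o(1)$. This holds precisely because $r$ grows like $(\log\varphi(m))^4$, which makes $Nt\lesssim\log(mNr)/r$ small enough. The same growth of $r$ is what secures $\lambda\le c\sqrt r$. Everything else is direct substitution, with all bounds uniform in $t$.
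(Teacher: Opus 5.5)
Your proposal is correct and follows essentially the same route as the paper. Both arguments check that $Nt\le 8\log(mNr)/r=o(1)$ so that Lemma~\ref{lem:radial} applies, rewrite $N^2\beta(\omega)^2t/8$ as $\frac{Nrt}{8}\,s\sum_j\|\omega_j\|^4$, note that the coefficient is $O(\log\varphi(m))=o(\sqrt r)$ so Lemma~\ref{lem:angular-moment} applies, and absorb the radial error $O(\sqrt{Nt}\,\log(mNr))=O((\log\varphi(m))^{-1/2})$ into the constant.
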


\begin{proof}
Write $\omega=(\omega_1,\dots,\omega_s)$, where $\omega_j\in\C^r$.  By
Lemma~\ref{lem:projection} and $N=2sr$,
\[
        \frac{N^2\beta(\omega)^2t}{8}
        =\frac{Nrt}{8}s\sum_{j=1}^s\|\omega_j\|^4.
\]
Also,
\[
        Nt\le\frac{8\log(mNr)}r=o(1).
\]
Thus Lemma~\ref{lem:radial} applies for all sufficiently large $m$ and gives
\[
        J_t(\omega)\le \exp\left(
        \left(1+O(\sqrt{Nt})\right)
        \frac{Nrt}{8}s\sum_{j=1}^s\|\omega_j\|^4\right).
\]
Since $m\le2\varphi(m)^2$ for every positive integer $m$ and
$r=\lceil(\log\varphi(m))^4\rceil$, we have
\[
        \log(mNr)=O(\log\varphi(m))=o(\sqrt r).
\]
The coefficient of the block-mass sum in the preceding exponential is
therefore $o(\sqrt r)$, so Lemma~\ref{lem:angular-moment} applies and gives
\[
        \E J_t(\omega)
        =O\left(\exp\left(
        \left(1+O(\sqrt{Nt})\right)\frac{Nrt}{8}\right)\right)
        =O\left(\exp\left(\frac{Nrt}{8}\right)\right).
\]
For the last estimate, the error in the exponent is uniformly
$O((\log(mNr))^{3/2}/\sqrt r)=o(1)$.  This proves the lemma.
\end{proof}

\subsection{Averaged contact bound}

\begin{proposition}[Averaged contact estimate]\label{prop:avgK}
There is a universal constant $C_{\ref{prop:avgK}}>0$ such that, for all
sufficiently large integers $m$, uniformly for
\[
        0<t\le\frac{8\log(mNr)}{Nr},
\]
we have
\[
        \E_{\Lambda_g}K_t(\Lambda_g)
        \le \frac{C_{\ref{prop:avgK}}}{m}\exp\left(\frac{Nrt}{8}\right).
\]
\end{proposition}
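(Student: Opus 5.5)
The plan is to apply the Siegel mean formula to the nonnegative Borel function
\[
        f(x)=\frac1m\mathbf 1_{R_t}(x)\,\Phi\left(\frac{a_0-\|x\|^{-2}}{\beta(x)\sqrt t}\right),
\]
so that $K_t(\Lambda_g)=\sum_{0\ne v\in\Lambda_g}f(v)$ by \eqref{eq:Kt}.

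\emph{Step 1 (Siegel formula).} Since $r\ge2$ for large $m$, Theorem~\ref{thm:siegel} gives
\[
        \E_{\Lambda_g}K_t(\Lambda_g)=\frac1{\covol(\OK^r)}\int_{V_\R}f(x)\,dx=\frac1{\vol(B_N)}\int_{V_\R}f(x)\,dx .
\]
The last equality uses the normalization $\covol(\OK^r)=\vol(B_N)$. Measurability of $f$ is clear, because $\beta$ is continuous and positive away from $0$ by Lemma~\ref{lem:projection}.

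\emph{Step 2 (polar coordinates).} Pass to polar coordinates with the stated normalization. By Lemma~\ref{lem:projection}, $\beta(\rho\omega)=\beta(\omega)$, since $\beta$ is homogeneous of degree zero. The shell $R_t$ is exactly $1/\sqrt{a_0}\le\rho\le 1/\sqrt{a_0-C_{\ref{lem:opnorm}}\sqrt{Nt}}$. The radial integral is therefore precisely $J_t(\omega)$ from \eqref{eq:Jt}, and we get
\[
        \E_{\Lambda_g}K_t(\Lambda_g)=\frac1m\int_{S^{N-1}}J_t(\omega)\,d\sigma(\omega)=\frac1m\E J_t(\omega),
\]
with $\omega$ uniform on the sphere. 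Two conditions must hold for the admissible range $0<t\le 8\log(mNr)/(Nr)$: we need $a_0-C_{\ref{lem:opnorm}}\sqrt{Nt}>0$ so that $R_t$ and $J_t$ are defined, and we need $Nt\le c$ for Lemma~\ref{lem:radial}. Both follow from $Nt\le 8\log(mNr)/r=o(1)$, which was already observed in the proof of Proposition~\ref{prop:det-drift}.

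\emph{Step 3 (angular average).} Apply Lemma~\ref{lem:angular-radial} to get $\E J_t(\omega)\le C\exp(Nrt/8)$ uniformly in this range. This yields the proposition with $C_{\ref{prop:avgK}}=C$.

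There is no serious obstacle, since the analytic work has already been done in Lemmas~\ref{lem:radial} and~\ref{lem:angular-moment}. The only points needing care are the bookkeeping ones. First, the Siegel formula is normalized by $\covol(\OK^r)$, which matches the $1/\vol(B_N)$ in the polar formula. Second, the factor $1/m$ is carried through unchanged: the sum is over all lattice vectors, not orbits, so no extra orbit factor enters.
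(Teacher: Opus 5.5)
Your proposal is correct and follows the paper's proof exactly: apply Theorem~\ref{thm:siegel} to the summand of~\eqref{eq:Kt} restricted to $R_t$, use polar coordinates to rewrite the average as $\frac1m\E J_t(\omega)$, and conclude with Lemma~\ref{lem:angular-radial}. Your added bookkeeping is accurate and only makes explicit what the paper leaves implicit: checking $r\ge2$, the degree-zero homogeneity of $\beta$, that the shell $R_t$ matches the radial limits of $J_t$, and the $\covol(\OK^r)=\vol(B_N)$ normalization.
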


\begin{proof}
Apply Theorem~\ref{thm:siegel} to the nonnegative Borel function given by the
summand in~\eqref{eq:Kt} on $R_t$ and zero outside $R_t$.  By the
polar-coordinate formula above,
\[
        \E_{\Lambda_g}K_t(\Lambda_g)
        =\frac1m\E J_t(\omega).
\]
Lemma~\ref{lem:angular-radial} proves the proposition.
\end{proof}

This proposition is the replacement for the averaged contact estimate
in~\cite{Klartag}.  The structure is
the same--a fixed-vector contact estimate
followed by a Siegel mean formula and a radial shell integral--but the result
has two new features: the all-vector sum is normalized by the orbit size,
producing the factor $1/m$, and the angular block-mass estimate changes the
exponent to $Nrt/8$.

\section{Selecting a stopping time and a good lattice}

Proposition~\ref{prop:avgK} suggests choosing the stopping time so that its
averaged contact bound remains small while $NrT$ is as large as possible.
Increase $C_{\ref{prop:avgK}}$ if necessary so that
$C_{\ref{prop:avgK}}\ge1$, and set
\begin{equation}
        T=\frac8{Nr}\log\left(\frac{mNr}{800C_{\ref{prop:avgK}}}\right).
\label{eq:chosen-time}
\end{equation}
For all sufficiently large integers $m$, this time is positive and lies in
the range of Proposition~\ref{prop:avgK}.  Moreover,
\[
        \frac1m\int_0^T\exp(Nrt/8)\,dt
        \le\frac1{100C_{\ref{prop:avgK}}}.
\]
We now select a lattice for which the corresponding integral is small.  Apart
from the arithmetic lattice ensemble, this is the same selection argument
used by Klartag~\cite[proof of Proposition~5.1]{Klartag}.

\begin{proposition}[Good lattice]\label{prop:good-lattice}
For all sufficiently large integers $m$, there exists a $\mu_m$-stable
lattice $\Lambda\subset V_\R$ of covolume $\vol(B_N)$ such that $a_0\Id$ is
$\Lambda$-free and
\[
        \int_0^T K_t(\Lambda)\,dt\le\frac1{10}.
\]
\end{proposition}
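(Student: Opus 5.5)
The plan is a first-moment argument over the arithmetic ensemble $\Lambda_g=g\OK^r$, $g\sim\mu$. Every such lattice is $\mu_m$-stable of covolume $\vol(B_N)$, so only two properties need to be arranged simultaneously: that $a_0\Id$ is $\Lambda_g$-free, and that $\int_0^TK_t(\Lambda_g)\,dt$ is small. Both are nonnegative random quantities, so Markov's inequality and a union bound suffice, provided each failure probability is well below $1/2$.

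\textbf{Step 1: the time integral.} By Tonelli and Proposition~\ref{prop:avgK}, which applies uniformly on $0<t\le T$ because $T$ lies in its admissible range for large $m$,
\[
        \E_{\Lambda_g}\int_0^TK_t(\Lambda_g)\,dt
        \le\frac{C_{\ref{prop:avgK}}}{m}\int_0^T e^{Nrt/8}\,dt
        \le\frac1{100}.
\]
The last inequality is the consequence of the choice~\eqref{eq:chosen-time} recorded after it. Markov's inequality then gives $\Prob\bigl(\int_0^TK_t\,dt>1/10\bigr)\le1/10$.

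Measurability of $g\mapsto K_t(\Lambda_g)$ and joint measurability in $t$ should be routine, since the summand is Borel in $(t,x)$ and $\Lambda$ is countable. One subtlety is that $K_t$ is only defined when $a_0-C_{\ref{lem:opnorm}}\sqrt{Nt}>0$. This holds on $[0,T]$ because $NT=o(1)$, as shown in the proof of Proposition~\ref{prop:det-drift}.

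\textbf{Step 2: emptiness of the initial ball.} The event to control is that $\Lambda_g$ has a nonzero point in the ball of radius $1/\sqrt{a_0}=1-1/N$. Applying Theorem~\ref{thm:siegel} to the indicator of this ball gives
\[
        \E\,\#\bigl\{0\ne v\in\Lambda_g:\|v\|<1-1/N\bigr\}
        =(1-1/N)^N\le e^{-1}.
\]
That is still not small enough for a union bound with slack. The fix is to use the $\mu_m$-symmetry: nonzero points come in orbits of exactly $m$, so the count is either $0$ or at least $m$. Markov's inequality therefore bounds the failure probability by $e^{-1}/m$, which is at most $1/10$ for large $m$. Note that this Siegel application requires $r\ge2$, which holds for large $m$.

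\textbf{Step 3: conclusion.} The two bad events have total probability at most $1/10+1/10<1$, so some $g$ avoids both. The lattice $\Lambda=\Lambda_g$ then has all the required properties.

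The only real obstacle is the factor-$m$ improvement in Step 2. Without it one would need to shrink the initial ball further, and that would cost a constant in $\det A_0$. With it, everything else is a direct citation of earlier results.
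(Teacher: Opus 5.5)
Your proposal is correct and follows essentially the same route as the paper: Tonelli with Proposition~\ref{prop:avgK} and Markov's inequality for the time integral, Theorem~\ref{thm:siegel} for the initial ball, and a union bound. The orbit-multiplicity refinement in Step~2 is valid but unnecessary, and your claim that it is the ``only real obstacle'' is mistaken: the plain bound $\Prob(\text{failure})\le(1-1/N)^N<e^{-1}$ already suffices because $e^{-1}+1/10<1$, which is exactly how the paper concludes.
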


\begin{proof}
Proposition~\ref{prop:avgK} gives
\[
        \E_{\Lambda_g}K_t(\Lambda_g)
        \le \frac{C_{\ref{prop:avgK}}}{m}\exp(Nrt/8).
\]
This holds for every $0<t\le T$.
Extend $K_t$ by zero at $t=0$.  Since $\Lambda_g=g\OK^r$, the map
$(g,t)\mapsto K_t(\Lambda_g)$ is nonnegative and Borel: locally after lifting
from the quotient to $\operatorname{SL}_r(K_\infty)$, it is a
countable sum of nonnegative Borel functions indexed by
$\OK^r\setminus\{0\}$.  Hence Tonelli's theorem gives
\[
        \E_{\Lambda_g}\int_0^T K_t(\Lambda_g)\,dt
        =\int_0^T\E_{\Lambda_g}K_t(\Lambda_g)\,dt
        \le \frac{C_{\ref{prop:avgK}}}{m}\int_0^T
        \exp\left(\frac{Nrt}{8}\right)\,dt
        \le \frac{C_{\ref{prop:avgK}}}{m}\cdot\frac8{Nr}
        \exp\left(\frac{NrT}{8}\right)=\frac1{100}.
\]
Thus
\begin{equation}
        \E_{\Lambda_g}\int_0^T K_t(\Lambda_g)\,dt\le\frac1{100}.
\label{eq:Ksmallavg}
\end{equation}

By Theorem~\ref{thm:siegel},
\[
        \E_{\Lambda_g}
        \#\{0\ne x\in\Lambda_g:\|x\|\le1-1/N\}
        =(1-1/N)^N<e^{-1}.
\]
Thus the probability that the closed ball of radius $1-1/N$ contains a nonzero
lattice point is at most $e^{-1}$.  By Markov's inequality
and~\eqref{eq:Ksmallavg},
\[
        \Prob\left(\int_0^T K_t(\Lambda_g)\,dt>\frac1{10}\right)
        \le\frac1{10}.
\]
Since $e^{-1}+1/10<1$, there exists a $\mu_m$-stable lattice $\Lambda$ of
covolume $\vol(B_N)$ such that
\[
        \{0\ne x\in\Lambda:\|x\|\le1-1/N\}=\varnothing
\]
and
\begin{equation}
        \int_0^T K_t(\Lambda)\,dt\le\frac1{10}.
\label{eq:goodK}
\end{equation}
For this lattice, the ball $\{x:\|x\|<1-1/N\}$ is $\Lambda$-free.  Since
$a_0=(1-1/N)^{-2}$, this ball is $\calE_{a_0\Id}$, so $a_0\Id$ is
$\Lambda$-free.

\end{proof}

We now complete the proof of the main technical result,
Theorem~\ref{thm:arithmetic-lattice}.

\begin{proof}[Proof of Theorem~\ref{thm:arithmetic-lattice}]
Let $\Lambda$ be the lattice selected in Proposition~\ref{prop:good-lattice},
and run the process of Proposition~\ref{prop:process} from $A_0=a_0\Id$ up to
the time $T$ defined in~\eqref{eq:chosen-time}.  We have
\[
        NT=\frac8r\log\left(\frac{mNr}{800C_{\ref{prop:avgK}}}\right)=o(1).
\]
Thus $a_0-C_{\ref{lem:opnorm}}\sqrt{NT}>0$ for all sufficiently large $m$, so the
shell $R_t$ and Lemma~\ref{lem:contact-vs-K} are valid for every $0<t\le T$;
the endpoint $t=0$ is irrelevant for the time integral.

By Lemma~\ref{lem:contact-vs-K} and~\eqref{eq:goodK},
\begin{equation}
        \int_0^T\E
        \frac{|\partial\calE_{A_t}\cap\Lambda|}{m}\,dt
        \le 2\int_0^T K_t(\Lambda)\,dt
        +Te^{-c_{\ref{lem:contact-vs-K}}N}
        =O(1).
\label{eq:Cint}
\end{equation}

Using~\eqref{eq:Cint} in Proposition~\ref{prop:det-drift}, we obtain, for all
sufficiently large $m$,
\[
        \E\log\det A_T
        \le -\frac{NrT}{4}+O(1).
\]
Since
\[
        \frac{NrT}{2}=4\log\left(\frac{mNr}{800C_{\ref{prop:avgK}}}\right)
        =4\log(mNr)+O(1),
\]
we have
\[
        \E\log\det A_T
        \le -2\log(mNr)+O(1).
\]
The random variable $\log\det A_T$ is integrable by the argument in
Proposition~\ref{prop:det-drift}.  Hence at least one Brownian realization
satisfies
\[
        \log\det A_T\le -2\log(mNr)+O(1),
\]
and hence
\begin{equation}
        \det A_T=O\left((mNr)^{-2}\right).
\label{eq:detfinal}
\end{equation}
The ellipsoid
\[
        E=\calE_{A_T}=\{x:A_T[x]<1\}
\]
satisfies $E\cap\Lambda=\{0\}$ by construction.  The final conversion from a
determinant bound to a large lattice-free ellipsoid is immediate.  Its
volume is
\[
        \vol(E)=\det(A_T)^{-1/2}\vol(B_N).
\]
By~\eqref{eq:detfinal},
\[
        \vol(E)
        \ge cmNr\vol(B_N).
\]
This proves the theorem.
\end{proof}

\paragraph*{Acknowledgements}

N.G.\ acknowledges support from the Swiss National Science Foundation grant 225437.

\medskip
\noindent
Elisha B. Abuya\\
Tel Aviv University, Israel.\\
\textit{e-mail:} \href{mailto:ebabuya@tauex.tau.ac.il}{\texttt{ebabuya@tauex.tau.ac.il}}

\medskip
\noindent
Nihar Gargava\\
Universit\'e Paris-Saclay, France.\\
\textit{e-mail:} \href{mailto:nihar.gargava@universite-paris-saclay.fr}{\texttt{nihar.gargava@universite-paris-saclay.fr}}

\medskip
\noindent
Yufei Zhao\\
Massachusetts Institute of Technology, USA.\\
\textit{e-mail:} \href{mailto:yufeiz@mit.edu}{\texttt{yufeiz@mit.edu}}

\end{document}